\providecommand{\U}[1]{\protect\rule{.1in}{.1in}}
\theoremstyle{plain}
\newtheorem{lemma}{Lemma}
\newtheorem{remark}{Remark}
\newtheorem{theorem}{Theorem}
\numberwithin{equation}{section}
\newcommand{\lap}{\mbox{$\triangle$}}
\newcommand{\disp}{\displaystyle}
\newcommand{\iny}{\infty}
\newcommand{\rh}{\rho}
\newcommand{\ep}{\epsilon}
\newcommand{\abs}[1]{\left\vert#1\right\vert}
\newcommand{\pr}[1]{\left( #1 \right) }
\begin{document}
\title[Quantitative uniqueness of solutions to parabolic equations]
{Quantitative uniqueness of solutions to parabolic equations}
\author{ Jiuyi Zhu}
\address{
Department of Mathematics\\
Louisiana State University\\
Baton Rouge, LA 70803, USA\\
Email:  zhu@math.lsu.edu }
\thanks{Zhu is supported in part by  NSF grant DMS-1656845}
\date{}
\subjclass[2010]{35K10, 58J35, 35A02.} \keywords {Carleman
estimates, unique continuation,
parabolic equations, vanishing order}

\begin{abstract}
 We investigate the quantitative uniqueness of solutions
to parabolic equations with lower order terms on compact smooth manifolds. Quantitative uniqueness
 is a quantitative form of strong
unique continuation property. We characterize quantitative uniqueness by the rate of vanishing. We can obtain
the vanishing order of
solutions by $C^{1, 1}$ norm of the potential functions, as well as the $L^\infty$ norm of the coefficient functions. Some quantitative Carleman estimates and three cylinder inequalities are established.
\end{abstract}

\maketitle
\section{Introduction}

In this paper, we study the quantitative uniqueness for parabolic
 equations with non-trivial lower order terms on compact smooth manifolds.
Suppose $u$ is a
non-trivial solution to
\begin{equation}
\triangle_g u-\partial_t u -\tilde{V}(x, t) u=0 \quad \mbox{on} \ \mathcal{M}^1, \label{goal}
\end{equation}
 where $\mathcal{M}^1=\mathcal{M}\times (-1, 1)$ and $g$ is the metric on the compact smooth manifold $\mathcal{M}$ with dimension $n\geq 2$. Assume that $\tilde{V}\in C^{1, 1}$, where
 $\|\tilde{V}\|_{C^{1, 1}(\mathcal{M}^1)}=\sup_{\mathcal{M}^1}|\tilde{V}|+\sup_{\mathcal{M}^1}|\nabla \tilde{V}|+
 \sup_{\mathcal{M}^1}|\partial_t \tilde{V}|$. We may
 assume that $\|\tilde{V}\|_{C^{1, 1}(\mathcal{M}^1)}\leq M$ for $M\geq 1$.
Quantitative uniqueness, also called as quantitative unique continuation, described by the vanishing order,
characterizes how much the solution vanishes.
  It is a quantitative way to describe the strong unique
continuation property. If the condition that solution vanishes of infinite order at a point implies that the solution
vanishes identically, then we say the strong unique
continuation property holds.

Let's first review the progresses about quantitative uniqueness for elliptic equations. Recently, there are much attentions in this topic.
The most interesting example for quantitative unique continuation arises from the study of nodal sets for eigenfunctions on manifolds. For classical
eigenfunctions on a compact smooth Riemannian manifold
$\mathcal{M}$,
\begin{equation}-\lap_{g} \phi_\lambda=\lambda \phi_\lambda \quad \quad \mbox{in} \ \mathcal{M}.
 \label{eigen} \end{equation}
Donnelly and Fefferman in \cite{DF88} obtained that  the maximal
vanishing order of $\phi_\lambda$ is everywhere less than
$C\sqrt{\lambda}$, here $C$ only depends on the manifold
$\mathcal{M}$. Such vanishing order for eigenfunction
$\phi_\lambda$ is sharp,  which can be verified from spherical
harmonics.

Kukavica in \cite{Ku98} studied the quantitative unique continuation for
Schr\"{o}dinger equation
\begin{equation}
-\lap u+V(x)u=0. \label{schro}
\end{equation}
 If $\|V\|_{C^1}\leq K $ for some large constant $K>1$.  Kukavica showed that the upper bound of
vanishing order is less than $CK$. From Donnelly and Fefferman's work in the case $V(x)=-\lambda$,  this upper bound is not optimal.
Recently,  by
different methods, the sharp vanishing order for solutions of (\ref{schro})
is shown to be less than $C K^{\frac{1}{2}}$
independently by Bakri in \cite{Bak12} and Zhu in \cite{Zhu16}. It matches the optimal result for the vanishing order of eigenfunctions in Donnelly and
Fefferman's work in \cite{DF88}.

If $\|V\|_{L^\infty}\leq K_0$ for some large $K_0>1$, Bourgain and Kenig \cite{BK05} considered the vanishing order for (\ref{schro}) with the background from
Anderson localization for the Bernoulli model. Bourgain and Kenig
established that
\begin{equation}
\|u\|_{L^\infty (\mathbb B_r)}\geq c_1r^{c_2
K_0^{\frac{2}{3}}} \quad \quad \mbox{as}\ r\to 0,
\label{like}
\end{equation}
 where $c_1,
c_2$ depend only on $n$ and upper bound of the solution.  The estimates (\ref{like})
implies that the upper bound order of vanishing for solutions is less than
$CK_0^{\frac{2}{3}}.$ Moreover, Kenig in \cite{Ken07}
pointed out that the exponent $\frac{2}{3}$ of $K_0^{\frac{2}{3}}$
is optimal for complex valued potential function $V(x)$ based on Meshkov's example in
\cite{Mes92}. Especially, if the real valued potential function $V(x)\geq 0$, Kenig, Silvestre and Wang \cite{KSW15} were able to show that the vanishing order is less than
$CK_0^{\frac{1}{2}}$ in the planar domain.

For the general second order elliptic equation
\begin{equation} -\triangle u+ W(x)\cdot \nabla u+V(x) u=0  \quad
 \label{subgoal}
\end{equation}
with $\|W(x)\|_{L^\infty}\leq K_1,$
Bakri in \cite{Bak13} and  Davey in \cite{Dav14} independently  generalized the quantitative uniqueness result and obtained  that the order of vanishing is less than
$C(K_0^{2/3}+K_1^{2} )$.

The strong unique continuation property also holds for
second order elliptic equation (\ref{subgoal}) with singular lower
terms in $L^r$ Lebesgue space, i.e.
$$  W(x)\in L^{t} \ \mbox{with} \ t>n \quad \mbox{and} \quad  V(x)\in L^{\frac{n}{2}}. $$
Very Recently, Davey and the author in \cite{DZ17} established  a new quantitative $L^p\to L^q$ Carleman estimates
for a range of $p$ and $q$ value. We were
  able to deal with (\ref{subgoal}) with both singular
gradient potential $W(x)$ and singular potential $V(x)$ for $n\geq 3$.
Our results work for a large range of singular potentials $V(x)$
and gradient potentials $W(x)$ . Especially, for $n=2$, Davey and the author in \cite{DaZ17} were able to characterize
 vanishing order for all admissible singular potentials $V(x)\in L^s$ for $s>1$ and gradient potentials $W(x)\in L^t$ for $t>2$. It offers a complete description of quantitative unique continuation for second order elliptic equations in $n=2$.

Next, let's  briefly review some literature about strong unique continuation property for parabolic equations. We aim to study quantitative unique continuation for parabolic equations. The strong unique continuation property for parabolic equations with time-independent coefficients were shown by Landis and Oleink \cite{LO74}
and independently by Lin \cite{L88}.
The unique continuation property for parabolic equations with time-dependent coefficient was proved by e.g. Sogge \cite{S90}, Poon \cite{P96}, Chen \cite{C96},   Escauriaza, Fern\'andez and Vega in \cite{E00}, \cite{EV01}, \cite{EF03}, \cite{F03}, to just mention a few. In particular, Poon in \cite{P96} defined a suitable frequency function to measure the space-time vanishing rate. It was shown that if $u$ satisfies the inequality
$$|\triangle u+\partial_t u|\leq N_0 (|\nabla u|+|u|) \quad  \mbox{in}  \ \mathbb R^n\times [0 , \ T) $$ for some positive constant $N_0$
and $u$ vanishes to the infinite order in both space and time variable at $(0, 0)$, then $u$ is trivial in  $\mathbb R^n\times [0 , \ T)$. In term of the concept of vanishing of infinite order in both space and time, we mean for all $k\geq 1$, there exist $C_k$ such that
$$ |u(x, t)|\leq C_k (|x|+\sqrt{t})^k  $$
for all $(x, t)$ near $(0, 0)$.

In \cite{E00} and \cite{EV01}, Escauriaza and Vega  proved some Carleman inequalities and obtained strong unique continuation property for
global (defined in $\mathbb R^n\times [0, T)$ and local solutions for the parabolic equations
\begin{equation}
\triangle u+\partial_t u=V(x,t) u
\label{intro}
\end{equation}
for some unbounded potential $V(x, t)$. In particular, they showed that in certain $L^r_x L^s_t$ Lebesgue space for the potential function $V(x,t)$, the solution vanishes globally if the solution vanishes infinite order in the space-time variable at $(0,0)$.   Later, Koch and Tataru \cite{KT09} further proved this property for rough variable coefficients and rough $L^r_x L^s_t$ potentials.

In \cite {V02} and \cite{V03}, Vessella considered another interesting strong unique continuation property for parabolic equations (\ref{intro}) in $D\times (-T, T)$, where $D$ is a domain in $\mathbb R^n$. If $V(x,t )$ is a bounded function and $u$ vanishes at infinity order in the spacial variable as
\begin{equation}
\int_{\bar{Q}^T_r(x_0)}\ dx dt=O(r^N), \quad \mbox{as} \ r\to 0
\label{order}
\end{equation}
for every $N\in \mathbb N$, then $u$ vanishes in $D\times (-T, T)$. Here $\bar{Q}^T_r(x_0)= B_r(x_0)\times (-T, \ T)$ for $x_0\in D$ and $B_r(x_0)$ is a ball centered at $x_0$ with radius $r$ in $D$ in the Euclidean space. If $u$ is not trivial, from the strong unique continuation property,  then the condition (\ref{order}) will not hold for every $N$. A nature question is how large the possible $N$ is in (\ref{order}). That is, we aims to quantify this strong unique continuation property by studying the rate of vanishing.  If the strong unique
continuation property holds for the solutions and solutions do not
vanish of infinite order, the vanishing order of solutions depends
on the coefficient functions appeared in the
equations. So it is of interest to find out the relation between the vanishing order and
the potential function $V(x, t)$.

 Inspired by the progresses for the quantitative uniqueness of elliptic equations, it is interesting to study this topic for parabolic equations. We are interested in obtaining the vanishing order
characterized by the spatial variable in (\ref{order}).
 Since the $L^2$ norm and $L^\infty$ norm are comparable for second order parabolic equations, we define the vanishing order for the solution in (\ref{goal}) at $x_0\in \mathcal{M}$ by
\begin{equation}\sup\{ k| \ \ \limsup_{r\to 0^+}\frac{\sup_{Q^1_r(x_0)} |u|}{r^k}\},
\label{defi}
\end{equation}
where $Q_r^1(x_0)=\mathbb B_r(x_0)\times (-1, 1)$ in this context. $\mathbb B_r(x_0)$ is the geodesic ball with radius $r$ centered at $x_0$ on the manifold $\mathcal{M}$. $r=d(x, x_0)$ is the Riemanian distance from $x$ to
$x_0$.

Since the strong unique continuation property for (\ref{goal}) is shown, our goal is to consider the vanishing order of the solutions on $\mathcal{M}$. By the definition of the manifold $\mathcal{M}^1=\mathcal{M}\times (-1, \ 1)$,  we can write $\mathcal{M}\times {0}$ as $\mathcal{M}$. We work on finding out the estimates at $x_0$ on $\mathcal{M}$ in the form
\begin{equation}
\|u\|_{L^\infty( Q_r^1(x_0))}\geq Cr^N.
\end{equation}
From (\ref{defi}), it implies the vanishing order of solution $u$ at $x_0$ on $\mathcal{M}$ is less than $N$.

We may normalize the solutions $u$ in (\ref{goal}) as follow,
\begin{equation}
\|u(x,t)\|_{L^\infty(\mathcal{M})}\geq 1 \quad  \mbox{and}  \quad   \|u(x,t)\|_{L^\infty(\mathcal{M}^1)}\leq C_0.
\label{normalize}
\end{equation}
See also remark \ref{rem2} for additional information on the normalization of solutions  in (\ref{normalize2}).
If $\tilde{V}(x, t)\in C^{1, 1}$, using quantitative Carleman estimates, three cylinder inequalities and propagation of smallness argument, we are able to show the following theorem.
\begin{theorem}
The vanishing order of solutions to (\ref{goal}) on $\mathcal{M}$ is everywhere less than
$ CM^{\frac{1}{2}},$
where $C$ is a positive constant depending only on the manifold $\mathcal{M}$ and $C_0$.
\label{th1}
\end{theorem}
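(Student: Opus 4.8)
The plan is to reduce the parabolic problem to an elliptic one via a parabolic-to-elliptic substitution that turns the heat operator into a Laplacian with a large drift/potential governed by $M^{1/2}$, and then run the quantitative Carleman–three-cylinder machinery as in the elliptic case. First I would introduce the change of variables that trades the time variable for an extra spatial-type variable: for a solution $u$ of $\triangle_g u - \partial_t u - \tilde V u = 0$, one sets $w(x,t,y) = u(x,t)\,h(t,y)$ where $h$ is a suitably chosen kernel (a Gaussian-type weight in $(t,y)$, e.g. $h(t,y) = e^{-(t+y^2/4)/\text{something}}$ adapted to the parabolic scaling) so that $w$ satisfies an elliptic inequality of the form $|\triangle_{g,t,y} w| \le C(\sqrt M\,|\nabla w| + M|w|)$ on an enlarged manifold $\mathcal{M}\times(-1,1)\times(-\delta,\delta)$. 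The key point is that the $C^{1,1}$ hypothesis on $\tilde V$ is exactly what lets us absorb the first-order and zeroth-order contributions into a bounded gradient potential of size $\sqrt M$ and a bounded potential of size $M$ — matching the elliptic scaling $(K_0^{2/3}+K_1^2)$ with $K_0 \sim M$, $K_1 \sim \sqrt M$, which gives the exponent $M^{1/2}$.

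The second step is to establish the quantitative Carleman estimate for this elliptic-type operator with the weight $\varphi(x) = |x-x_0|^{-\alpha}$ or the logarithmic weight, carrying the large parameters explicitly; this is where the earlier "quantitative Carleman estimates" of the paper are invoked. From the Carleman estimate one derives, in the standard way, a three-cylinder (or three-ball) inequality: there exist $\theta\in(0,1)$ and $C$ depending only on $\mathcal{M}$ such that for concentric cylinders $Q_{r_1}\subset Q_{r_2}\subset Q_{r_3}$ around a point on $\mathcal{M}$,
\begin{equation*}
\|u\|_{L^\infty(Q_{r_2})} \le e^{C\sqrt M}\,\|u\|_{L^\infty(Q_{r_1})}^{\theta}\,\|u\|_{L^\infty(Q_{r_3})}^{1-\theta}.
\end{equation*}
The prefactor $e^{C\sqrt M}$ is the crucial feature: it is the total "cost" of propagating smallness across one scale, and it is the appearance of $\sqrt M$ rather than $M$ here that ultimately yields the $M^{1/2}$ vanishing order.

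The third step is the propagation-of-smallness / chaining argument. Using the normalization \eqref{normalize}, pick a point $x_1$ where $|u|$ is comparable to $1$; connect $x_1$ to the given point $x_0$ by a chain of overlapping cylinders of fixed size (the number of which depends only on $\mathcal{M}$), and iterate the three-cylinder inequality along the chain to transfer the lower bound $\|u\|_{L^\infty}\gtrsim 1$ near $x_1$ to a lower bound $\|u\|_{L^\infty(Q_{r_0}(x_0))} \ge e^{-C\sqrt M}$ at a fixed radius $r_0$. Finally, a second iteration of the three-cylinder inequality on the dyadic cylinders $Q_{2^{-j}r_0}(x_0)$ — doubling the radius at each step, again paying $e^{C\sqrt M}$ per step — converts this fixed-scale lower bound into the small-radius estimate $\|u\|_{L^\infty(Q_r^1(x_0))} \ge C r^{C M^{1/2}}$, which by \eqref{defi} is precisely the claimed bound on the vanishing order.

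The main obstacle I expect is the first step: making the parabolic-to-elliptic reduction genuinely work with the correct dependence on $M$. One has to choose the auxiliary weight $h(t,y)$ so that the extra terms generated by $\partial_t$ acting through the substitution, together with the error from $\tilde V$ and its derivatives, all land in the form "bounded gradient potential of size $\lesssim\sqrt M$ plus bounded potential of size $\lesssim M$" uniformly on the enlarged cylinder — and so that the resulting elliptic operator still has coefficients controlled purely by $\mathcal{M}$. The $C^{1,1}$ regularity of $\tilde V$ is used sharply here: $\partial_t\tilde V$ and $\nabla\tilde V$ both enter the transformed equation, and only because they are bounded by $M$ (not worse) does the $\sqrt M$ scaling survive. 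Keeping the metric uniformly elliptic on the added variable and checking that the Carleman weight's geometry is unaffected are the technical points to be careful about; everything after that is the by-now-standard elliptic argument with large parameters.
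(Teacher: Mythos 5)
Your overall architecture (quantitative Carleman estimate $\to$ three-cylinder inequality with an explicit $e^{C\sqrt M}$ cost $\to$ chaining plus a final small-radius application) is the same skeleton the paper uses, and steps two and three would go through essentially as you describe. The genuine gap is in your first step, and it is fatal in two distinct ways.

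First, the parabolic-to-elliptic reduction you propose does not exist in the form you describe. Writing $w(x,t,y)=u(x,t)h(t,y)$ gives $\triangle_{x,y}w = h\,\triangle_x u + u\,\partial_y^2 h$, so no choice of kernel $h$ converts the term $\partial_t u$ into an elliptic expression in $w$: a product ansatz can only produce terms in which derivatives fall entirely on $u$ or entirely on $h$, never the mixed structure needed to absorb $\partial_t u$. The classical trick you are half-remembering ($\phi_\lambda(x)\mapsto e^{\sqrt{\lambda}\,y}\phi_\lambda(x)$, which harmonizes an eigenfunction) works only because the ``potential'' $-\lambda$ is a constant; for a genuinely time-dependent solution of the heat equation there is no analogous lift, which is precisely why the paper proves a Carleman estimate directly for the parabolic operator in polar geodesic coordinates with the weight $g(r)=f^{-1}(\ln r)$, treating $\partial_t u$ as part of the antisymmetric piece $\mathcal{Q}_\tau^2$ and showing its contribution (the $\frac{1}{\tau}\int|\partial_t v|^2 e^{5\rho}$ term) is nonnegative and can be discarded.

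Second, even granting some reduction to an elliptic inequality $|\triangle w|\le C(\sqrt M\,|\nabla w|+M|w|)$, your own arithmetic does not give $M^{1/2}$: the elliptic bound $C(K_0^{2/3}+K_1^2)$ with $K_0\sim M$ and $K_1\sim\sqrt M$ evaluates to $C(M^{2/3}+M)\sim CM$, not $CM^{1/2}$. That route is essentially Theorem \ref{th2} of the paper (the $L^\infty$-coefficient case), not Theorem \ref{th1}. The exponent $\tfrac12$ is obtained only by exploiting the $C^{1,1}$ regularity of $\tilde V$ \emph{inside} the Carleman estimate: one integrates by parts on the terms $\int a(\rho)\tilde V\,v\,A_1(\rho)\partial_\rho v$ and $\int a^2(\rho)\tilde V\,\partial_t(v^2)$ so that only $\tilde V$, $\nabla\tilde V$, $\partial_t\tilde V$ (all bounded by $M$) appear multiplied by the small weight $e^{\rho}$, and these are then dominated by the $\tau^3\int v^2 e^{\epsilon\rho}$ term once $\tau\gtrsim M^{1/2}$. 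Reducing to an $L^\infty$ differential inequality throws this structure away before it can be used.
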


If we review the quantitative uniqueness result for elliptic equations (\ref{schro}), the $\frac{1}{2}$ exponent of the upper bound $M^{\frac{1}{2}}$ for the $C^{1,1}$ norm of potential function $\tilde{V}$ matches the one by \cite{Bak12} and \cite{Zhu16} for $C^1$ norm of $V$ in (\ref{schro}). It seems to be a sharp result. For example, if we can consider the case that $u(x, t)=\phi_\lambda(x)$ where $\phi_\lambda(x)$ is the eigenfunction in (\ref{eigen}), then $\tilde{V}(x, t)=-\lambda$ in equation (\ref{goal}). The statement of Theorem \ref{th1} agrees with the Donnelly and Fefferman's sharp results in \cite{DF88}.

We are also able to study the vanishing order for parabolic equations with non-trivial bounded lower order terms,
\begin{equation}
\triangle_g u-\partial_t u-W(x,t)\cdot \nabla u -{V}(x, t) u=0 \quad \mbox{on} \ \mathcal{M}^1, \label{goal2}
\end{equation}
where
\begin{equation}
\|W\|_{L^\infty(\mathcal{M}^1)}\leq M_1 \quad  \mbox{and} \quad  \|V\|_{L^\infty(\mathcal{M}^1)}\leq M_0,
\label{condit}
\end{equation}
with $M_0, M_1\geq 1$.
We are able to show that
\begin{theorem}
The vanishing order of solutions to (\ref{goal2}) on $\mathcal{M}$ is everywhere less than
\begin{equation} C(M_0^{\frac{2}{3}}+M_1^2), \label{similar}\end{equation}
where $C$ is a positive constant depending only on the manifold $\mathcal{M}$ and $C_0$.
\label{th2}
\end{theorem}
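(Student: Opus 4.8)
The plan is to mimic the architecture of the proof of Theorem~\ref{th1}, the only real change being the scaling of the large Carleman parameter forced by the fact that $W$ and $V$ are now merely bounded rather than $C^{1,1}$. Concretely: (i) establish a quantitative parabolic Carleman estimate for the operator $\triangle_g-\partial_t$ near a space--time point $(x_0,0)$, with a free parameter $\tau$ and with a \emph{positive} power of $\tau$ in front of both the zeroth--order and the gradient terms on the left; (ii) choose $\tau\asymp M_0^{2/3}+M_1^2$ so that the perturbation $W\cdot\nabla u+Vu$ is absorbed into the left side; (iii) convert this into a three cylinder inequality whose constant is of size $e^{C(M_0^{2/3}+M_1^2)}$; and (iv) run the propagation of smallness argument, starting from the normalization \eqref{normalize}, down to the scale $r\to0$ at $x_0$.

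For step (i) I would work in geodesic normal coordinates at $x_0$ and use a weight $e^{2\tau\phi}$ in which $\phi$ combines a singular function of the Riemannian distance $\sigma=d((x,t),(x_0,0))$ with the Gaussian--in--time factor used by Poon and by Escauriaza--Vega, obtaining an inequality of the schematic form
\[
\tau^{3}\!\int e^{2\tau\phi}\sigma^{-1-2\tau}|u|^{2}\,dx\,dt
+\tau\!\int e^{2\tau\phi}\sigma^{1-2\tau}|\nabla u|^{2}\,dx\,dt
\le C\!\int e^{2\tau\phi}\sigma^{2-2\tau}\bigl|(\triangle_g-\partial_t)u\bigr|^{2}\,dx\,dt
\]
for all $u\in C_c^\infty$ supported in a small space--time cylinder about $(x_0,0)$ and all $\tau\ge\tau_0$, with $C,\tau_0$ depending only on $\mathcal{M}$; the metric/curvature errors and the cross terms between the spatial singular weight and the time weight are treated exactly as in the estimate underlying Theorem~\ref{th1}. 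For steps (ii)--(iii) one inserts $\chi u$ for a space--time cutoff $\chi$: then $(\triangle_g-\partial_t)(\chi u)=\chi(W\cdot\nabla u+Vu)$ plus commutator terms supported where $\nabla\chi\neq0$ or $\partial_t\chi\neq0$. Since $|Vu|^2\le M_0^2|u|^2$ and $\sigma^{2-2\tau}=\sigma^{3}\sigma^{-1-2\tau}$ with $\sigma$ bounded by a fixed small constant on the support, the $Vu$ term is absorbed by $\tau^{3}\int e^{2\tau\phi}\sigma^{-1-2\tau}|u|^{2}$ once $\tau^{3}\ge CM_0^2$, i.e.\ once $\tau\ge CM_0^{2/3}$; likewise $|W\cdot\nabla u|^2\le M_1^2|\nabla u|^2$ and $\sigma^{2-2\tau}=\sigma\,\sigma^{1-2\tau}$, so the gradient term is absorbed once $\tau\ge CM_1^2$. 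Taking $\tau=\tau_0+C(M_0^{2/3}+M_1^2)$, a standard choice of three nested cylinders together with a parabolic Caccioppoli estimate (to pass from $L^2$ to $L^\infty$, using that these norms are comparable for \eqref{goal2}) then yields a three cylinder inequality
\[
\|u\|_{L^\infty(Q^1_{r_2}(x_0))}\le C\,\|u\|_{L^\infty(Q^1_{r_1}(x_0))}^{\,\alpha}\,\|u\|_{L^\infty(Q^1_{r_3}(x_0))}^{\,1-\alpha},
\qquad r_1<r_2<r_3,
\]
with $\alpha\in(0,1)$ depending only on the geometry and with $C=e^{C(M_0^{2/3}+M_1^2)}$.

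For step (iv): by \eqref{normalize} there is a point $y\in\mathcal{M}$ with $|u(y,0)|$ bounded below by a fixed constant; chaining the three cylinder inequality along a finite sequence of overlapping cylinders joining $(y,0)$ to $(x_0,0)$ gives $\|u\|_{L^\infty(Q^1_{r_0}(x_0))}\ge e^{-C(M_0^{2/3}+M_1^2)}$ for some $r_0$ depending only on $\mathcal{M}$ and $C_0$. Iterating the three cylinder inequality on the shrinking cylinders $Q^1_r(x_0)$, $r<r_0$ --- equivalently, using the doubling inequality that the Carleman estimate produces at $x_0$ --- one then obtains
\[
\|u\|_{L^\infty(Q^1_r(x_0))}\ge C\,r^{\,C(M_0^{2/3}+M_1^2)},\qquad 0<r<r_0,
\]
which by \eqref{defi} says exactly that the vanishing order of $u$ at $x_0$ is everywhere less than $C(M_0^{2/3}+M_1^2)$. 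The main obstacle is step (i): building the parabolic Carleman weight so that the gradient term survives on the left with a positive power of $\tau$ --- this is precisely what forces $M_1^2$ and not a worse power --- while at the same time controlling $\partial_t u$ through the Gaussian--in--time factor and keeping all curvature and mixed space/time error terms uniformly absorbable as $\tau\asymp M_0^{2/3}+M_1^2\to\infty$. Once that estimate is available, steps (ii)--(iv) are a routine adaptation of the argument already carried out for Theorem~\ref{th1}.
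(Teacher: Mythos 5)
Your proposal follows the same strategic skeleton as the paper: a quantitative Carleman estimate with positive powers of $\tau$ in front of both $\|u\|$ and $\|\nabla u\|$ on the good side, absorption of $Vu$ once $\tau^{3}\gtrsim M_0^{2}$ and of $W\cdot\nabla u$ once $\tau\gtrsim M_1^{2}$, a three cylinder inequality whose constants carry the factor $\exp\{C(M_0^{2/3}+M_1^2)\}$, and the propagation of smallness chain from a point where $|u|\geq 1$ back to $x_0$. Two points where you diverge from the paper are worth flagging. First, the paper does not build a new Carleman estimate for the bounded-coefficient operator from scratch: Theorem \ref{th4} is obtained from the potential-free estimate of Lemma \ref{le1} by a one-line triangle inequality, with the thresholds $\tau\geq C\|V\|_{L^\infty}^{2/3}$ and $\tau\geq C\|W\|_{L^\infty}^{2}$ coming from comparing the weights $r^{\frac{4-n}{2}}e^{-\tau g(r)}$ against $r^{-\frac{n}{2}}e^{(-\tau+\frac{\ep}{2})g(r)}$ and $r^{\frac{2-n}{2}}e^{(-\tau+\frac{\ep}{2})g(r)}$ — exactly the power counting you describe, but performed as a perturbation rather than inside the commutator computation. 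Second, and more substantively, your weight is singular at the space--time point $(x_0,0)$ and carries a Gaussian-in-time factor in the style of Poon and Escauriaza--Vega, whereas the paper uses Vessella's purely spatial weight $e^{-\tau g(r)}$, $g(r)=f^{-1}(\ln r)$, singular along the whole segment $\{x_0\}\times(-T,T)$ (note the support condition $u\in C^\infty_0(Q^T_{r_0}\setminus\{x_0\}\times(-T,T))$). The spatial weight is what directly produces three \emph{cylinder} inequalities over $\mathbb B_r\times(-T,T)$ and hence the purely spatial vanishing order of \eqref{defi}; your space--time weight would naturally yield vanishing rates on shrinking parabolic cylinders. Since those are contained in the full-time cylinders, a lower bound there does imply the stated theorem, but the gradient-term absorption with only $\tau\gtrsim M_1^2$ is considerably more delicate in the Gaussian-weight framework (this is essentially the harder route taken in \cite{CK17} via frequency functions), so the paper's spatial-weight construction is the more economical path to \eqref{similar}. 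Your final iteration step is also stated a bit loosely: the usable three cylinder inequality necessarily carries a second, additive term of size $\exp\{C(M_0^{2/3}+M_1^2)(g(\frac{r_3}{2})-g(r_1))\}\|u\|_{L^\infty(Q^T_{2r_1})}$ coming from the case where the optimizing $\tau$ falls below the admissible threshold, and it is precisely this term (with $g(r_1)\approx\ln r_1\to-\infty$) that produces the $r^{C(M_0^{2/3}+M_1^2)}$ lower bound; a radius-independent multiplicative constant $e^{C(M_0^{2/3}+M_1^2)}$ as you wrote it is not what the estimate delivers.
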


From Kenig's observation in \cite{Ken07}, the power $\frac{2}{3}$ for $M_0^{\frac{2}{3}}$ in the theorem seems to be optimal.
Very recently, Camliyurt and Kukuvica \cite{CK17} studied the quantitative unique continuation for the global solutions of (\ref{goal2}) in $\mathbb R^n\times (0 ,\ T)$. By assuming periodicity of solutions, similar upper bound of vanishing order as (\ref{similar}) for spatial and time variable was obtained. Parabolic frequency function and similarity variable argument were used in \cite{CK17}. Our arguments are relied on quantitative Carleman estimates and three cylinder inequalities.

Besides the important roles of quantitative uniqueness in size measurement of nodal sets \cite{DF88}, spectral theory of Schr\"odinger equations \cite{BK05}, backward uniqueness \cite{EF03}, it also finds applications in inverse problems and control theory \cite{AN08} and other topics, to just mention a few.

 The paper is organized as follows. In section 1, we prove the quantitative
Carleman estimates for second order parabolic equation with $C^{1,1}$ potentials or with $L^\infty$ potentials. Section
2 is devoted to the proof of three cylinder inequalities from Carleman estimates. In section 3, using the propagation of smallness argument, we show the proof of
Theorem \ref{th1} and Theorem \ref{th2}. In the paper, since we are interested in the dependence of vanishing order on $M$, $M_1$ and $M_2$, we assume that they are large constants.
The letters $c$, $C$, $C_1$ and $C_2$  denote generic positive constants that do not depends on $u$, and may vary from line to line.

\section{Carleman estimates}

In this section, we show the quantitative Carleman estimates for parabolic equations. We drop the notation of metric $g$ and simply write $\triangle$ for the Laplace-Beltrami  operator $\triangle_g$. Carleman estimates are weighted integral inequalities with a weight function
$e^{-\tau g(r)}$, where the function $g(r)$ usually satisfies some convexity properties. Let's define the weight function. For a fixed number $\ep$ such that $0<\ep<1$ and $\rh_0<0$, we define $f$ on $(-\infty, \ \rh_0)$ by  $f(\rh)=\rh+e^{\ep \rh}$.  We introduce the weight function $$g(r)=f^{-1}(\ln r)$$ for small $r$. We can check that $g(r)\approx \ln r$ as $r\to 0$. Now we state the main results in this section.

\begin{theorem}
There exist positive constants $r_0$, $C$, $C_1$ and $C_2$, which depend only on $\mathcal{M}$ and $\ep$, such that, for any $\tilde{V}\in C^{1, 1}(\mathcal{M}^1)$, $x_0\in \mathcal{M}$, $u\in C^{\infty}_0\big (Q^T_{r_0}(x_0) \backslash \big\{\{ x_0\}\times(-T , \ T)\big\}\big)$ and $\tau>C(1+ \|\tilde{V}\|_{C^{1, 1}}^{\frac{1}{2}})$, one has
\begin{align}
\|(\triangle u-\partial_t u-\tilde{V}u)e^{-\tau g(r)}r^{\frac{4-n}{2}}\|_{L^2}& \geq C_1\tau^{\frac{1}{2}} \| \nabla u e^{(-\tau+\frac{\ep}{2})g(r)} r^{\frac{2-n}{2}}\|_{L^2} \nonumber\\ &+C_2\tau^{\frac{3}{2}} \| u e^{(-\tau+\frac{\ep}{2})g(r)} r^{-\frac{n}{2}}\|_{L^2}.
\label{Carle}
\end{align}
\label{thCarle}
\end{theorem}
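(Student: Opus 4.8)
The plan is to reduce the Carleman estimate to a known (potential-free) parabolic Carleman estimate by absorbing the term $\tilde V u$ into the right-hand side, which is precisely what the subelliptic gain in $\tau$ allows. First I would establish the core inequality in the homogeneous case, i.e. for the operator $L_0 u = \triangle u - \partial_t u$, in the form
\begin{equation}
\|(\triangle u-\partial_t u)e^{-\tau g(r)}r^{\frac{4-n}{2}}\|_{L^2}^2 \gtrsim \tau \|\nabla u\, e^{(-\tau+\frac{\ep}{2})g(r)} r^{\frac{2-n}{2}}\|_{L^2}^2 + \tau^{3}\|u\, e^{(-\tau+\frac{\ep}{2})g(r)} r^{-\frac{n}{2}}\|_{L^2}^2.
\label{homog}
\end{equation}
This is done by the standard conjugation-and-commutator method: set $w = e^{-\tau g(r)}r^{\frac{4-n}{2}} u$ (or a closely related substitution adapted to the weight), rewrite $e^{-\tau g(r)}r^{\frac{4-n}{2}} L_0 u$ as a conjugated operator acting on $w$, split into formally self-adjoint and skew-adjoint parts $S + A$, and expand $\|Sw+Aw\|_{L^2}^2 = \|Sw\|^2 + \|Aw\|^2 + \langle [S,A]w, w\rangle$. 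The weight $g(r) = f^{-1}(\ln r)$ with $f(\rho)=\rho+e^{\ep\rho}$ is chosen exactly so that the commutator $[S,A]$ is positive and produces the gradient term with a factor $\tau$ and the zeroth-order term with a factor $\tau^{3}$; the extra convexity encoded by the $e^{\ep\rho}$ bump (and the resulting $e^{\frac{\ep}{2}g(r)}$ on the right) is what supplies the positive lower-order remainder that controls error terms. Here I would lean on the polar-coordinate/Fermi-coordinate computation near $x_0$ on $\mathcal M$, noting that on a compact manifold the metric is Euclidean to first order near $x_0$ so the extra curvature terms are $O(r)$ relative to the Euclidean model and are absorbed for $r<r_0$.

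Given \eqref{homog}, the passage to the full operator is routine: writing $\triangle u-\partial_t u-\tilde V u = (\triangle u-\partial_t u) - \tilde V u$ and using the triangle inequality,
\begin{equation}
\|(\triangle u-\partial_t u-\tilde V u)e^{-\tau g(r)}r^{\frac{4-n}{2}}\|_{L^2} \geq \|(\triangle u-\partial_t u)e^{-\tau g(r)}r^{\frac{4-n}{2}}\|_{L^2} - \|\tilde V u\, e^{-\tau g(r)}r^{\frac{4-n}{2}}\|_{L^2}.
\end{equation}
Since $r^{\frac{4-n}{2}} = r^2 \cdot r^{-\frac{n}{2}}$ and $r<r_0$ is bounded, and since $e^{-\tau g(r)} \leq e^{(-\tau+\frac{\ep}{2})g(r)}$ (because $g(r)<0$ for small $r$), one gets $\|\tilde V u\, e^{-\tau g(r)}r^{\frac{4-n}{2}}\|_{L^2} \leq C r_0^2 M \|u\, e^{(-\tau+\frac{\ep}{2})g(r)}r^{-\frac{n}{2}}\|_{L^2}$ with $M = \|\tilde V\|_{C^{1,1}}$. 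Now the zeroth-order term on the right of \eqref{homog} carries weight $\tau^{3/2}$ (after taking square roots), so for $\tau \geq C(1+M^{1/2})$ we have $\tau^{3/2} \gg C r_0^2 M \cdot \tau^{0}$ — more precisely $C r_0^2 M \leq \tfrac12 C_2 \tau^{3/2}$ once $\tau > C' M^{1/2}$ — and the error is absorbed into half of the zeroth-order term, leaving \eqref{Carle} with adjusted constants. (The $C^{1,1}$ hypothesis on $\tilde V$, rather than merely $L^\infty$, is not needed for this absorption step itself — $L^\infty$ suffices here — but matching the threshold $\tau \gtrsim M^{1/2}$ to the subsequent three-cylinder and propagation-of-smallness arguments in Section 2 is where the sharp $M^{1/2}$ exponent comes from.)

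I expect the main obstacle to be the commutator computation for \eqref{homog}: getting the signs right for the parabolic (non-symmetric, because of $\partial_t$) operator, tracking how the time derivative interacts with the purely spatial weight $g(r)$, and verifying that all remainder terms — the curvature corrections from the manifold, the terms generated by $r^{\frac{4-n}{2}}$, and the cross terms between the elliptic and parabolic pieces — are dominated by the positive contributions of $[S,A]$ uniformly for $\tau$ large and $r<r_0$. The choice $f(\rho)=\rho + e^{\ep\rho}$ is precisely engineered so that $-f''/(f')^{?}$-type quantities stay positive and bounded below; making this quantitative (rather than just asymptotic as $r\to0$) is the delicate part, and is where one must be careful that $r_0$ and the constants depend only on $\mathcal M$ and $\ep$. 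A secondary technical point is that $u$ is only assumed smooth and compactly supported away from the axis $\{x_0\}\times(-T,T)$, so no boundary terms arise in the integration by parts — this is why that support condition is imposed and should be remarked on.
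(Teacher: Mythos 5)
Your reduction of the theorem to the potential-free estimate by the triangle inequality cannot reach the stated threshold $\tau>C(1+\|\tilde V\|_{C^{1,1}}^{1/2})$, and this is the crux of the theorem rather than a side issue. The error term you need to absorb is $\|\tilde V u\,e^{-\tau g(r)}r^{\frac{4-n}{2}}\|_{L^2}\le CM\|u\,e^{(-\tau+\frac{\ep}{2})g(r)}r^{-\frac{n}{2}}\|_{L^2}$, and absorbing this into $\tfrac12 C_2\tau^{\frac{3}{2}}\|u\,e^{(-\tau+\frac{\ep}{2})g(r)}r^{-\frac{n}{2}}\|_{L^2}$ forces $M\lesssim\tau^{\frac{3}{2}}$, i.e.\ $\tau\gtrsim M^{\frac{2}{3}}$. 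At the claimed threshold $\tau\sim M^{\frac{1}{2}}$ one has $\tau^{\frac{3}{2}}\sim M^{\frac{3}{4}}\ll M$, so the absorption fails. The triangle-inequality argument is exactly how the paper deduces Theorem \ref{th4} (the $L^\infty$ case) from Lemma \ref{le1}, and it is precisely why that theorem carries the larger threshold $\tau>C(1+\|V\|_{L^\infty}^{\frac{2}{3}}+\|W\|_{L^\infty}^2)$. Your parenthetical remark that $L^\infty$ would suffice for the absorption is the warning sign: if the $M^{\frac{1}{2}}$ threshold followed from $\|\tilde V\|_{L^\infty}\le M$ alone, it would contradict the optimality of the $\frac{2}{3}$ exponent for bounded complex potentials coming from Meshkov-type examples.

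To obtain $M^{\frac{1}{2}}$ one must keep $\tilde V$ inside the conjugated operator rather than peel it off at the start. The paper writes $\mathcal{Q}_\tau=\mathcal{Q}_\tau^1+\mathcal{Q}_\tau^2$ with the potential placed in $\mathcal{Q}_\tau^1$, expands $\|\mathcal{Q}_\tau v\|^2$, and in the cross term $2\langle\mathcal{Q}_\tau^1 v,\mathcal{Q}_\tau^2 v\rangle$ integrates by parts the contributions $-2\int a(\rh)\tilde V v\,A_1(\rh)\partial_\rh v\sqrt{\gamma}$ and $2\int a(\rh)\tilde V v\,a(\rh)\partial_t v\sqrt{\gamma}$ so that the derivatives fall on $\tilde V$. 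This is exactly where the $C^{1,1}$ hypothesis is used: the resulting errors are of size $C\tau\|\tilde V\|_{C^{1,1}}\int v^2e^{\rh}\sqrt{\gamma}$ and $C\|\tilde V\|_{C^{1,1}}\int v^2e^{\rh}\sqrt{\gamma}$ --- linear in $M$ with at most one factor of $\tau$ --- and these, together with the coefficient $2\eta\tau(\tau^2-\|\tilde V\|_{C^{1,1}})$ arising in the lower bound for $\|\mathcal{Q}_\tau^1 v\|^2$, are dominated by the positive term $\tau^3\int v^2e^{\ep\rh}\sqrt{\gamma}$ precisely when $\tau^2\gtrsim M$, i.e.\ $\tau\gtrsim M^{\frac{1}{2}}$. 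Your outline of the potential-free commutator computation (conjugation, convexified weight, positivity of the cross term, absence of boundary terms from the support condition) is consistent with the paper's, but the treatment of $\tilde V$ must be rebuilt along these lines for the stated theorem to hold.
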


As a consequence, we have the following Carleman estimates which do not involve potential functions.
\begin{lemma}
There exist positive constants $r_0$, $C$, $C_1$ and $C_2$,  which depend only on $\mathcal{M}$ and $\ep$, such that, for any  $x_0\in \mathcal{M}$, $u\in C^{\infty}_0\big (Q^T_{r_0}(x_0) \backslash \big\{\{ x_0\}\times(-T , \ T)\big\}\big)$  and $\tau>C$, one has
\begin{align}
\|(\triangle u-\partial_t u )e^{-\tau g(r)}r^{\frac{4-n}{2}}\|_{L^2}& \geq C_1\tau^{\frac{1}{2}} \| \nabla u e^{(-\tau+\frac{\ep}{2})g(r)} r^{\frac{2-n}{2}}\|_{L^2} \nonumber\\ &+C_2\tau^{\frac{3}{2}} \| u e^{(-\tau+\frac{\ep}{2})g(r)} r^{-\frac{n}{2}}\|_{L^2}.
\label{Carle1}
\end{align}
\label{le1}
\end{lemma}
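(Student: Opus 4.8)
The plan is to deduce this lemma directly from Theorem \ref{thCarle} by the obvious specialization $\tilde{V}\equiv 0$. Indeed, setting $\tilde V = 0$ in \eqref{Carle}, we have $\|\tilde V\|_{C^{1,1}} = 0$, so the threshold $\tau > C(1 + \|\tilde V\|_{C^{1,1}}^{1/2})$ becomes simply $\tau > C$, and the left-hand side of \eqref{Carle} becomes exactly $\|(\triangle u - \partial_t u) e^{-\tau g(r)} r^{(4-n)/2}\|_{L^2}$. The right-hand side is unchanged, so \eqref{Carle1} is precisely \eqref{Carle} in this case, with the same constants $r_0, C, C_1, C_2$ depending only on $\mathcal{M}$ and $\ep$. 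All the hypotheses on $x_0$ and on $u \in C^\infty_0\big(Q^T_{r_0}(x_0) \backslash (\{x_0\}\times(-T,T))\big)$ transfer verbatim.

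Alternatively — and this is the form I would actually write, in case one prefers not to invoke the potential version as a black box — one can absorb the potential term into the gradient/function terms on the right. Starting from \eqref{Carle} applied with a general $\tilde V$, the triangle inequality gives
\begin{align}
\|(\triangle u - \partial_t u)e^{-\tau g(r)} r^{\frac{4-n}{2}}\|_{L^2}
&\geq \|(\triangle u - \partial_t u - \tilde V u)e^{-\tau g(r)} r^{\frac{4-n}{2}}\|_{L^2} - \|\tilde V u\, e^{-\tau g(r)} r^{\frac{4-n}{2}}\|_{L^2} \nonumber \\
&\geq C_1 \tau^{\frac12} \|\nabla u\, e^{(-\tau+\frac{\ep}{2})g(r)} r^{\frac{2-n}{2}}\|_{L^2} + C_2 \tau^{\frac32} \|u\, e^{(-\tau+\frac{\ep}{2})g(r)} r^{-\frac{n}{2}}\|_{L^2} \nonumber \\
&\quad - \|\tilde V\|_{L^\infty} \|u\, e^{-\tau g(r)} r^{\frac{4-n}{2}}\|_{L^2}. \nonumber
\end{align}
Since $r^{(4-n)/2} = r^2 \cdot r^{-n/2}$ and $r \le r_0 < 1$, together with $e^{-\tau g(r)} \le e^{(-\tau+\frac{\ep}{2})g(r)}$ (as $g(r) < 0$ and $\ep > 0$), the subtracted term is bounded by $r_0^2 \|\tilde V\|_{L^\infty} \|u\, e^{(-\tau+\frac{\ep}{2})g(r)} r^{-n/2}\|_{L^2}$. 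Choosing $\tau$ large enough that $C_2 \tau^{3/2} \ge 2 r_0^2 \|\tilde V\|_{L^\infty}$, i.e. $\tau \ge C(1 + \|\tilde V\|_{L^\infty}^{2/3})$, this is absorbed into half of the last term; with $\tilde V = 0$ no such restriction is needed and one recovers \eqref{Carle1} for all $\tau > C$.

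There is no real obstacle here: Lemma \ref{le1} is a corollary, not an independent result, and the only point requiring a line of care is verifying that the weight comparison $e^{-\tau g(r)} r^{(4-n)/2} \le e^{(-\tau+\frac{\ep}{2})g(r)} r^{-n/2}$ holds on the support of $u$, which follows immediately from $0 < r \le r_0 < 1$, $g(r) \approx \ln r < 0$, and $\ep \in (0,1)$. The constants $r_0, C, C_1, C_2$ are inherited unchanged from Theorem \ref{thCarle}, so the dependence claim (only on $\mathcal{M}$ and $\ep$) is automatic.
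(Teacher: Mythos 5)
Your proof is correct and matches the paper's own treatment: the paper likewise obtains Lemma \ref{le1} by setting $\tilde{V}\equiv 0$ in Theorem \ref{thCarle}, so that the threshold $\tau>C(1+\|\tilde V\|_{C^{1,1}}^{1/2})$ reduces to $\tau>C$ and \eqref{Carle1} is the specialization of \eqref{Carle}. Your additional absorption argument is fine but unnecessary for this lemma.
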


If we set $\tilde{V}(x, t)=0$, then the Carleman estimates (\ref{Carle1}) in Lemma \ref{le1} follows from (\ref{Carle}). It is also obtained by Vessella in \cite{V03}.

To deal with the equation with (\ref{goal2}) with bounded coefficient functions, we need to establish the following Carleman estimates.
\begin{theorem}
There exist positive constants $r_0$, $C$, $C_1$ and $C_2$, which depend only on $\mathcal{M}$ and $\ep$, such that, for any ${V}, {W} \in L^\infty(\mathcal{M}^1)$, $x_0\in \mathcal{M}$, $u\in C^{\infty}_0\big (Q^T_{r_0}(x_0) \backslash \big\{\{ x_0\}\times(-T , \ T)\big\}\big)$  and $\tau>C(1+ \|{V}\|_{L^\infty}^{\frac{2}{3}}+\|{W}\|_{L^\infty}^2) $, one has
\begin{align}
\|(\triangle u -\partial_t u -W\cdot \nabla u-{V}u)e^{-\tau g(r)}r^{\frac{4-n}{2}}\|_{L^2}& \geq C_1\tau^{\frac{1}{2}} \| \nabla u e^{(-\tau+\frac{\ep}{2})g(r)} r^{\frac{2-n}{2}}\|_{L^2} \nonumber\\ &+C_2\tau^{\frac{3}{2}} \| u e^{(-\tau+\frac{\ep}{2})g(r)} r^{-\frac{n}{2}}\|_{L^2}.
\label{Carle2}
\end{align}
\label{th4}
\end{theorem}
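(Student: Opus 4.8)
The plan is to deduce Theorem \ref{th4} from the potential‑free Carleman estimate in Lemma \ref{le1} by absorbing the lower order terms $W\cdot\nabla u$ and $Vu$ into the left‑hand side, exactly the way Theorem \ref{thCarle} is presumably handled, but now tracking the $L^\infty$ norms instead of the $C^{1,1}$ norm. Concretely, write $\triangle u-\partial_t u-W\cdot\nabla u-Vu$ as $(\triangle u-\partial_t u)-(W\cdot\nabla u+Vu)$, apply the triangle inequality to the weighted $L^2$ norm, and estimate
\[
\|(W\cdot\nabla u+Vu)e^{-\tau g(r)}r^{\frac{4-n}{2}}\|_{L^2}\le \|W\|_{L^\infty}\,\|\nabla u\,e^{-\tau g(r)}r^{\frac{4-n}{2}}\|_{L^2}+\|V\|_{L^\infty}\,\|u\,e^{-\tau g(r)}r^{\frac{4-n}{2}}\|_{L^2}.
\]
Then I would apply Lemma \ref{le1} to bound $\|(\triangle u-\partial_t u)e^{-\tau g(r)}r^{\frac{4-n}{2}}\|_{L^2}$ from below by the right‑hand side of \eqref{Carle1}, and show that for $\tau$ large enough in terms of $\|V\|_{L^\infty}^{2/3}$ and $\|W\|_{L^\infty}^{2}$ the error terms are dominated by a fraction (say one half) of the two terms $C_1\tau^{1/2}\|\nabla u\,e^{(-\tau+\frac{\ep}{2})g(r)}r^{\frac{2-n}{2}}\|_{L^2}$ and $C_2\tau^{3/2}\|u\,e^{(-\tau+\frac{\ep}{2})g(r)}r^{-\frac{n}{2}}\|_{L^2}$.

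The arithmetic that makes the exponents $\tfrac23$ and $2$ appear is the crux, and it is the step I expect to require the most care. The mismatch between the weights $e^{-\tau g(r)}$ on the source side and $e^{(-\tau+\frac{\ep}{2})g(r)}$ on the gradient/solution side must be exploited: since $g(r)\to-\infty$ as $r\to 0$ and $r\le r_0<1$, the factor $e^{\frac{\ep}{2}g(r)}=r^{\,\ep/2\cdot(1+o(1))}$ is a small positive power of $r$, so near $r=0$ one has bounds of the form $r^{\frac{4-n}{2}}e^{-\tau g(r)}\le C\, r^{\frac{2-n}{2}}e^{(-\tau+\frac{\ep}{2})g(r)}$ and similarly $r^{\frac{4-n}{2}}e^{-\tau g(r)}\le C\, r^{-\frac{n}{2}}e^{(-\tau+\frac{\ep}{2})g(r)}$ once $r$ is small (equivalently, absorbing factors $r^{2}$ and $r^{2}e^{\frac{\ep}{2}g(r)}$). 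After these substitutions the gradient error is bounded by $C\|W\|_{L^\infty}\|\nabla u\,e^{(-\tau+\frac{\ep}{2})g(r)}r^{\frac{2-n}{2}}\|_{L^2}$, which is absorbed into $C_1\tau^{1/2}(\dots)$ provided $\tau^{1/2}\ge C'\|W\|_{L^\infty}$, i.e. $\tau\gtrsim\|W\|_{L^\infty}^{2}$. For the potential term one splits $V u$ with Young's/Cauchy–Schwarz: a part is controlled by the solution term at cost $\tau^{-3/2}$ and a part by the gradient term, and balancing the two contributions of $\|V\|_{L^\infty}$ against $\tau^{1/2}$ and $\tau^{3/2}$ yields the condition $\tau\gtrsim\|V\|_{L^\infty}^{2/3}$; alternatively one interpolates the weighted $\|Vu\|$ between the two right‑hand terms with exponents $1/3$ and $2/3$.

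Thus the structure is: (i) triangle inequality to peel off the lower order terms; (ii) invoke Lemma \ref{le1}; (iii) use the weight gap $e^{\frac{\ep}{2}g(r)}$ and the support in $r\le r_0$ to convert the source‑side weights into the right‑hand‑side weights, picking up extra positive powers of $r$ that are bounded on the support; (iv) absorb the gradient error using $\tau^{1/2}\gtrsim\|W\|_{L^\infty}$ and the potential error using $\tau^{3/2}\gtrsim \|V\|_{L^\infty}\cdot\tau^{1/3}$ together with Cauchy–Schwarz, both of which follow from the hypothesis $\tau>C(1+\|V\|_{L^\infty}^{2/3}+\|W\|_{L^\infty}^{2})$ after choosing $C$ large; (v) collect terms, halving the constants $C_1,C_2$ to obtain \eqref{Carle2}. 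The only genuine subtlety beyond bookkeeping is making sure that all the $r$‑power absorptions are uniform on $Q^T_{r_0}(x_0)$, which is where the choice of $r_0$ (depending on $\mathcal{M}$ and $\ep$) and the asymptotics $g(r)\approx\ln r$ are used; everything else is the standard perturbation argument for Carleman estimates.
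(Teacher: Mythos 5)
Your proposal follows essentially the same route as the paper: apply the triangle inequality to peel off $W\cdot\nabla u$ and $Vu$, invoke Lemma \ref{le1} for the principal part, and absorb the two error terms into the right-hand side using $\tau^{1/2}\gtrsim\|W\|_{L^\infty}$ and $\tau^{3/2}\gtrsim\|V\|_{L^\infty}$, which is exactly what the hypothesis $\tau>C(1+\|V\|_{L^\infty}^{2/3}+\|W\|_{L^\infty}^{2})$ provides. Your explicit tracking of the weight gap $e^{\frac{\ep}{2}g(r)}$ and the extra powers of $r$ on the support is the justification the paper leaves implicit, and the direct absorption of the $Vu$ term into the $\tau^{3/2}$ term suffices without the Cauchy--Schwarz splitting you mention as an option.
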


We first show the proof of Theorem \ref{th4} from Lemma \ref{le1}.
\begin{proof}
By the triangle inequality, it follows that
\begin{align}
\|(\triangle u-\partial_t u-W\cdot \nabla u-{V}u)e^{-\tau g(r)}r^{\frac{4-n}{2}}\|_{L^2}& \geq \|(\triangle u-\partial_t u)e^{-\tau g(r)}r^{\frac{4-n}{2}}\|_{L^2} \nonumber \\&  -\|V\|_{L^\infty}\|ue^{-\tau g(r)}r^{\frac{4-n}{2}}\|_{L^2}\nonumber \\&-\|W\|_{L^\infty}\|\nabla ue^{-\tau g(r)}r^{\frac{2-n}{2}}\|_{L^2}.
\label{triangle}
\end{align}
By the assumption of $\tau$ in the theorem, we choose $C$ in the lower bound of $\tau$ such that
\begin{equation}
\|V\|_{L^\infty}\|ue^{-\tau g(r)}r^{\frac{4-n}{2}}\|_{L^2}\leq \frac{C_1}{2}\tau^{\frac{3}{2}} \| u e^{(-\tau+\frac{\ep}{2})g(r)} r^{-\frac{n}{2}}\|_{L^2}
\label{fuc1}
\end{equation}
and
\begin{equation}
\|W\|_{L^\infty}\|\nabla ue^{-\tau g(r)}r^{\frac{4-n}{2}}\|_{L^2}\leq \frac{C_2}{2} \tau^{\frac{1}{2}} \| \nabla u e^{(-\tau+\frac{\ep}{2})g(r)} r^{\frac{2-n}{2}}\|_{L^2},
\label{fuc2}
\end{equation}
where $C_1$ and $C_2$ are those appeared in (\ref{Carle1}). Applying (\ref{Carle1}) in the inequality (\ref{triangle}) and using (\ref{fuc1}) and (\ref{fuc2}), we arrive at (\ref{Carle2}).
\end{proof}

The rest of the section is devoted to the proof of Theorem \ref{thCarle}. We adapt the strategy from the proof of (\ref{Carle1}) in \cite{V03}.
\begin{proof}[Proof of Theorem \ref{thCarle}]

We use polar geodesic coordinate $(r, \theta)$ near $x_0$. Using the Einstein's notation,
$$\triangle u -\partial_t u=\partial^2_r u+(\partial_r \ln \sqrt{\gamma}+\frac{n-1}{r})\partial_r u+\frac{1}{r^2}\triangle_{\theta} u-\partial_t u,  $$
where $$\triangle_\theta=\frac{1}{\sqrt{\gamma}}\partial_i(\sqrt{\gamma}\gamma^{ij}\partial_j u),$$ $\partial_i=\frac{\partial}{\partial \theta_i}$ and $\gamma_{ij}(r, \theta)$ is the metric on $S^{n-1}$. We write $\gamma=\det(\gamma_{ij})$. Since $\mathcal{M}$ is a compact smooth manifold,
it is well known that
\begin{equation}
\left\{\begin{array}{lll}
C^{-1}\leq \gamma \leq C;  \medskip\\
\partial_r (\gamma^{ij})\leq C(\gamma^{ij})\quad (\mbox{in the sense of tensors});  \medskip\\
|\partial_r(\gamma)|\leq C
\end{array}
\right.
\label{gama}
\end{equation}
for small enough $r$.
Set a new coordinate $z=\ln r$. In the new coordinates, it follows that
\begin{equation}\triangle u -\partial_t  u= e^{-2z}\big( \partial^2_{z} u+(\partial_{z} \ln \sqrt{\gamma}+(n-2))\partial_{z} u+\triangle_{\theta} u\big)-\partial_t u.  \label{star} \end{equation}
Then we introduce a new transformation $z=f(\rh)$ with $f(\rh)=\rh+ e^{\epsilon \rh}$ for some fixed $0<\epsilon<1$. Under this transformation, (\ref{star}) will take the following expression
\begin{align*}\triangle u -\partial_t u&= e^{-2f(\rh)}(1+\ep e^{\ep \rh})^{-2}
\Big[ \partial^2_{\rh} u+ \big((n-2)(1+\ep e^{\ep \rh})-\frac{ \ep^2 e^{\ep \rh}}{ 1+\ep e^{\ep \rh}}\big)\partial_\rh u+ \partial_\rh(\ln \sqrt{\gamma})\partial_\rh u
\\ &+(1+\ep e^{\ep \rh})^2\triangle_{\theta} u\Big]-\partial_t u.
\end{align*}
Let
\begin{equation} \mathcal{L} u=\triangle u -\partial_t u-\tilde{V}(x, t)u.
\end{equation}
Due to those changes of variables, the function $u$ is in the variable $(\rh, \theta, t)$. The operator $\mathcal{L}$ takes the form
\begin{equation}
\mathcal{L} u=e^{-2f(\rh)}(1+\ep e^{\ep \rh})^{-2}(\mathcal{Q}(u)+\tilde{\mathcal{Q}} (u))
\label{LLL}
\end{equation}
where
\begin{align*}\mathcal{Q}(u)=&\partial^2_{\rh} u+ \big((n-2)(1+\ep e^{\ep \rh})-\frac{ \ep^2 e^{\ep \rh}}{ 1+\ep e^{\ep \rh}}\big)\partial_\rh u+ (1+\ep e^{\ep \rh})^2\triangle_{\theta} u
\\ & - e^{2f(\rh)}(1+\ep e^{\ep \rh})^{2} \partial_t u
-e^{2f(\rh)}(1+\ep e^{\ep \rh})^{2} \tilde{V}(e^{f(\rh)}, \theta, t)u
\end{align*}
and
$$\tilde{\mathcal{Q}} (u)=\partial_\rh (\ln\sqrt{\gamma})\partial_\rh u.   $$
For the ease of notation, let
\begin{equation}  a(\rh)=e^{2f(\rh)}(1+\ep e^{\ep})^2
\label{aaa}
\end{equation}
and
\begin{equation} b(\rh)=(n-2)(1+\ep e^{\ep \rh})-\frac{ \ep^2 e^{\ep \rh}}{ 1+\ep e^{\ep \rh}}.
\label{bbb}
\end{equation}
Then
\begin{align*}\mathcal{Q}(u)=&\partial^2_{\rh} u+ b(\rh)\partial_\rh u+ (1+\ep e^{\ep \rh})^2\triangle_{\theta} u
 - a(\rh) \partial_t u
+a(\rh) \tilde{V}(e^{f(\rh)}, \theta, t)u.
\end{align*}

Since $u\in C^{\infty}_0\big (Q^T_{r_0}(x_0) \backslash \big\{\{ x_0\}\times(-T , \ T)\big\}\big)$,  in term of the variable $(\rh, \theta, t)$,  the function $u$ has support in  $(-\infty, \rh_0)\times S^{n-1}\times (-T, T)$, where $\rh_0=-|\rh_0|$ with $|\rh_0|$ chosen to be sufficiently large.
Set $$u= e^{\tau \rh} v.$$
We introduce a conjugate operator
$$\mathcal{Q}_\tau (v)=e^{-\tau \rh}\mathcal{Q}(u)=e^{-\tau \rh}\mathcal{Q}(e^{\tau \rh}v).$$
Direct computations show that
\begin{equation} \mathcal{Q}_\tau (v)=\mathcal{Q}_\tau^1 (v)+ \mathcal{Q}_\tau^2 (v), \label{QQ1}\end{equation}
where
\begin{equation}\mathcal{Q}_\tau^1 (v)=\partial_\rh^2 v+( b(\rh)\tau+\tau^2)v-a(\rh) \tilde{V}(e^{f(\rh)}, \theta, t)v+ (1+\ep e^{\ep \rh})^2\triangle_{\theta} v  \label{QQ2} \end{equation}
and
\begin{equation} \mathcal{Q}_\tau^2 (v)=(2\tau+b(\rh))\partial_\rh v- a(\rh)\partial_t v.  \label{QQ3}\end{equation}
Furthermore, set
\begin{equation} A_0(\rh)=\tau^2+b(\rh)\tau
\label{A0A0}
\end{equation}
and
\begin{equation} A_1(\rh)=2 \tau+b(\rh).
\label{A1A1}
\end{equation}
Then $\mathcal{Q}_\tau^1 (v)$ and $\mathcal{Q}_\tau^2 (v)$ can be rewritten as follows,
\begin{equation}
\mathcal{Q}_\tau^1 (v)=\partial_\rh^2 v+A_0(\rh) v-a(\rh) \tilde{V}(e^{f(\rh)}, \theta, t)v+ (1+\ep e^{\ep \rh})^2\triangle_{\theta} v \label{back} \end{equation}
and
\begin{equation}
\mathcal{Q}_\tau^2 (v)= A_1(\rh) \partial_\rh v- a(\rh)\partial_t v.
\label{back1}\end{equation}

To deal with the integration on $(-\infty, \rh_0)\times S^{n-1}\times (-T, T)$, we introduce the $L^2$ norm as
$$ \|v\|^2=\int_{(-\infty, \rh_0)\times S^{n-1}\times (-T, T)} v^2 \sqrt{\gamma} d\rh d\theta dt, $$
where $d \theta$ is the measure on $S^{n-1}$.
From (\ref{LLL}), we obtain that
\begin{align}
\| e^{-\tau \rh} e^{2f(\rh)} (1+\ep e^{\ep \rh})^2 \mathcal{L} u\|&= \|\mathcal{Q}_\tau (v)+\tilde{{\mathcal{Q}}}_\tau (v)\| \nonumber\\
&\geq \|\mathcal{Q}_\tau (v)\|-\|\tilde{{\mathcal{Q}}}_\tau (v)\|,
\label{basic}
\end{align}
where
\begin{align*}
\tilde{{\mathcal{Q}}}(v)&= e^{-\tau\rh} \tilde{{\mathcal{Q}}}(u)= e^{-\tau\rh} \tilde{{\mathcal{Q}}}(e^{\tau \rh} v).
\end{align*}

From (\ref{gama}) and the definition of $f(\rh)$, it follows that
\begin{align}
|\partial_\rh (\ln \sqrt{\gamma})|&=|\frac{\gamma'}{2\gamma}e^{f(\rh)}f'(\rh)|   \nonumber \\
&\leq C e^{\rh}. \label{often}
\end{align}
Furthermore, it implies that
\begin{align}
|\tilde{{\mathcal{Q}}}(v)|&=e^{-\tau\rh} |\partial_\rh (\ln \sqrt{\gamma})\partial_\rh (e^{\tau \rh} v)|\nonumber \\
&\leq C e^{\rh}|\tau v+\partial_\rh v|.
\label{below}
\end{align}
Later on,  we will show that $\|\tilde{{\mathcal{Q}}}(v)\|$ can be controlled by $\|\mathcal{Q}_\tau (v)\|$.

Now we focus on the estimates on $\mathcal{Q}_\tau (v)$. Squaring $\mathcal{Q}_\tau (v)$ in (\ref{QQ1}) gives that
\begin{equation}
\|\mathcal{Q}_\tau(v)\|^2=\|\mathcal{Q}^1_\tau (v)\|^2+\|\mathcal{Q}^2_\tau (v)\|^2+2<\mathcal{Q}^1_\tau (v), \mathcal{Q}^2_\tau (v)>.
\label{imporr}
\end{equation}
We study each other term in the right hand side of (\ref{imporr}). We first consider the inner product $<\mathcal{Q}^1_\tau (v), \mathcal{Q}^2_\tau (v)>$. From the expression of $\mathcal{Q}^1_\tau (v)$ in (\ref{back}) and $\mathcal{Q}^2_\tau (v)$ in (\ref{back1}), we have
\begin{align}
2<\mathcal{Q}^1_\tau (v), \mathcal{Q}^2_\tau (v)>&= 2\int \partial^2_\rh v\big(A_1(\rh)\partial_\rh v-a(\rh)\partial_t v\big)\sqrt{\gamma} \nonumber \\
&+2\int A_0(\rh)v\big(A_1(\rh)\partial_\rh v-a(\rh)\partial_t v\big)\sqrt{\gamma} \nonumber \\
&+ 2\int (1+\ep e^{\ep \rh})^2\triangle_\theta v\big(A_1(\rh)\partial_\rh v-a(\rh)\partial_t v\big)\sqrt{\gamma} \nonumber \\
&-2 \int a(\rh)\tilde{V}(e^{f(\rh)}, \theta, t) v \big(A_1(\rh)\partial_\rh v-a(\rh)\partial_t v\big)\sqrt{\gamma}.
\label{right}
\end{align}
Next we compute each term in the right hand side of the last equality. Integration by parts shows that
\begin{align}
2\int \partial^2_\rh v A_1(\rh)\partial_\rh v \sqrt{\gamma}&= \int A_1(\rh)\partial_\rh (\partial_\rh v)^2 \sqrt{\gamma}  \nonumber \\
&=-\int A_1^{'}(\rh)|\partial_\rh v|^2\sqrt{\gamma}-\int A_1(\rh)|\partial_\rh v|^2 \partial_\rh(\ln \sqrt{\gamma})\sqrt{\gamma}.
\label{111}
\end{align}

From the integration by parts argument, we have
\begin{align}
-2\int \partial^2_\rh v a(\rh)\partial_t v \sqrt{\gamma}&= 2\int a'(\rh)\partial_\rh v\partial_t v \sqrt{\gamma}
+2\int a(\rh)\partial_\rh v\partial_t v \partial_\rh(\ln \sqrt{\gamma})\sqrt{\gamma} \nonumber \\
&+\int a(\rh)\partial_t(\partial_\rh v)^2 \sqrt{\gamma}\nonumber \\
&= 2\int a'(\rh)\partial_\rh v\partial_t v \sqrt{\gamma}
+2\int a(\rh)\partial_\rh v\partial_t v \partial_\rh(\ln \sqrt{\gamma})\sqrt{\gamma}
\label{222}
\end{align}
since $a(\rh)$ and $\sqrt{\gamma}$ do not depend on $t$.

Performing the integration by parts again yields that
\begin{align}
2\int A_0(\rh)v A_1(\rh) \partial_\rh v \sqrt{\gamma}&= \int A_0(\rh) A_1(\rh) \partial_\rh v^2 \sqrt{\gamma} \nonumber \\
&=\int \big(A_0(\rh) A_1(\rh)\big)_\rh v^2\sqrt{\gamma} -\int A_0(\rh) A_1(\rh) v^2 \partial_\rh(\ln \sqrt{\gamma})\sqrt{\gamma}. \label{222}
\end{align}

Since $A_0(\rh)$, $a(\rh)$ and $\sqrt{\gamma}$ are independent of $t$, the following term vanishes.
\begin{align}
-2\int A_0(\rh)v a(\rh) \partial_t v \sqrt{\gamma}& =-\int A_0(\rh)a(\rh)\partial_t v^2 \sqrt{\gamma} \nonumber \\
& =\int \big(A_0(\rh)a(\rh)\sqrt{\gamma}\big)_t v^2  \nonumber \\ &=0. \label{333}
\end{align}
We continue to investigate the right hand side of (\ref{right}). Note that $ |\nabla_\theta v|^2= \gamma^{ij}\partial_i u\partial_j u. $
Integration by parts yields that
\begin{align}
2\int (1+\ep e^{\ep \rh})^2\triangle_\theta v A_1(\rh)  \partial_\rh v  \sqrt{\gamma}&= -2\int (1+\ep e^{\ep \rh})^2 A_1(\rh)
\partial_\rh(\partial_i v)\partial_j v \gamma^{ij} \sqrt{\gamma} \nonumber \\
&=-\int (1+\ep e^{\ep \rh})^2 \partial_\rh |\nabla_\theta v|^2 A_1(\rh)\sqrt{\gamma}  \nonumber \\
&=\int \big( (1+\ep e^{\ep \rh})^2 A_1(\rh)\big)_\rh |\nabla_\theta v|^2 \sqrt{\gamma} \nonumber \\ &+\int (1+\ep e^{\ep \rh})^2 A_1(\rh)
|\nabla_\theta v|^2 \partial_\rh(\ln \sqrt{\gamma})\sqrt{\gamma}  \nonumber \\
&\geq   \int \big( (1+\ep e^{\ep \rh})^2 A_1(\rh)\big)_\rh |\nabla_\theta v|^2 \sqrt{\gamma}\nonumber \\ &-C\int (1+\ep e^{\ep \rh})^2 A_1(\rh)
|\nabla_\theta v|^2 e^{\rh} \sqrt{\gamma},
\label{444}
\end{align}
where we have used the estimates (\ref{often}) in the the last inequality. Recall the definition of $A_1(\rh)$ in (\ref{A1A1}).
We have
\begin{equation}
\big( (1+\ep e^{\ep \rh})^2 A_1(\rh)\big)_\rh \geq C \ep^2 \tau e^{\ep \rh}
\end{equation}
for some fixed $ 0<\ep<1$.
Since $-\infty<\rh<-|\rh_0|$ with $|\rh_0|$ large enough, from (\ref{444}), it follows that
\begin{align}
2\int (1+\ep e^{\ep \rh})^2\triangle_\theta v A_1(\rh)  \partial_\rh v  \sqrt{\gamma} \geq C \ep^2 \tau \int  e^{\ep \rh}|\nabla_\theta v|^2 \sqrt{\gamma}.
\label{555}
\end{align}

Since $a(\rh)$ and $\sqrt{\gamma}$ are independent of $t$, performing the integration by parts implies that
\begin{align}
-2\int  (1+\ep e^{\ep \rh})^2\triangle_\theta v a(\rh) \partial_t v \sqrt{\gamma}& = 2 \int (1+\ep e^{\ep \rh})^2 a(\rh)\partial_{t i}v
\gamma^{ij}\partial_j v \sqrt{\gamma} \nonumber\\
&=\int (1+\ep e^{\ep \rh})^2 a(\rh) \partial_t |\nabla_\theta v|^2\sqrt{\gamma} \nonumber \\
&=0.
\label{666}
\end{align}

Before calculating the integral involving $\tilde{V}$, we estimate the derivative of $a(\rh)$ and $A_1(\rh)$.
Recall that the definition of $a(\rh)$ in (\ref{aaa}), $A_1(\rh)$ in (\ref{A1A1}) and $f(\rh)=\rh+e^{\ep \rh}$. performing the derivative gives that
\begin{equation}
|a'(\rh)|\leq C e^{2\rh} \quad \mbox{and} \quad |A_1^{'}(\rh)|\leq C\ep^2 e^{\ep \rh}
\label{estima}
\end{equation}
for some fixed $\ep$ and large enough $|\rh_0|$.
Using integration by parts, we consider the last term in the right hand side of (\ref{right}), we obtain that
\begin{align}
-2\int a(\rh) \tilde{V}( e^{f(\rh)}, \theta, t) v A_1(\rh)\partial_\rh v \sqrt{\gamma}&= -\int a(\rh) \tilde{V}( e^{f(\rh)}, \theta, t) A_1(\rh) \partial_\rh v^2 \sqrt{\gamma} \nonumber \\
&=\int \{a'(\rh) \tilde{V} A_1(\rh)+a(\rh)\tilde{V}_r e^{f(\rh)}f'(\rh) A_1(\rh) \nonumber \\
&+a(\rh)\tilde{V}A_1^{'}(\rh)+ a(\rh) \tilde{V} A_1(\rh)\partial_\rh ( \ln \sqrt{\gamma})\big\} v^2 \sqrt{\gamma}.
\end{align}
From the assumption of $a(\rh)$ in (\ref{aaa}) and the estimates (\ref{estima}),
we get
\begin{align}
-2\int a(\rh) \tilde{V}( e^{f(\rh)}, \theta, t) v A_1(\rh)\partial_\rh v \sqrt{\gamma}\geq -C\tau \|\tilde{V}\|_{C^{1,1}} \int v^2 e^\rho \sqrt{\gamma}.
\label{777}
\end{align}
Similar arguments yield that
\begin{align}
2\int a(\rh) \tilde{V}( e^{f(\rh)}, \theta, t) v a(\rh)\partial_t v \sqrt{\gamma}&= \int a^2(\rh) \tilde{V}( e^{f(\rh)}, \theta, t) \partial_t v^2\sqrt{\gamma} \nonumber \\
&=-\int a^2(\rh) \tilde{V}_t( e^{f(\rh)}, \theta, t) v^2\sqrt{\gamma} \nonumber \\
&\geq - C\|\tilde{V}\|_{C^{1,1}} \int v^2 e^\rho \sqrt{\gamma}.
\label{888}
\end{align}
We have computed all the integrals in the right hand side of (\ref{right}). Combining all the terms in calculations from (\ref{111}) to (\ref{888}), we obtain
\begin{align}
\|\mathcal{Q}_\tau(v)\|^2=\|\mathcal{Q}_\tau^1(v)\|^2+I_1+I_2,
\label{compren}
\end{align}
where
\begin{align}
I_1&=\|\mathcal{Q}_\tau^2(v)\|^2-\int A_1^{'}(\rh)|\partial_\rh v|^2\sqrt{\gamma}+2\int a'(\rh)\partial_\rh v\partial_t v \sqrt{\gamma}\nonumber \\
&-\int \big( A_1(\rh)A_2(\rh)\big)_\rh v^2 \sqrt{\gamma}+C\ep^2\tau \int e^{\ep \rh} |\nabla_\theta v|^2 \sqrt{\gamma}
\label{I1I1}
\end{align}
and
\begin{align}
I_2&=-\int A_1(\rh) |\partial_\rh v|^2 (\partial_\rh \ln \sqrt{\gamma})\sqrt{\gamma}+2 \int a(z)\partial_t v \partial_\rh v (\partial_\rh \ln \sqrt{\gamma})\sqrt{\gamma}
 \nonumber \\&-\int A_0(\rh)A_1(\rh) v^2 (\partial_\rh \ln \sqrt{\gamma})\sqrt{\gamma}- C\tau \|\tilde{V}\|_{C^{1,1}} \int v^2 e^\rho \sqrt{\gamma}.
 \label{I2I2}
\end{align}
Direct calculations show that
$$A_0(\rh)A_1(\rh)= 2\tau^3+3b(\rh)\tau^2+b^2(\tau) \tau.  $$
Recall the definition of $ b(\rh)$ in (\ref{bbb}). Calculating the derivative gives that
\begin{align}
|\big(A_0(\rh)A_1(\rh)\big)_\rh|&= |3b'(\rh) \tau^2+2b(\rh)b'(\rh)\tau| \nonumber\\
&\leq C\tau^2 e^{\ep \rh}.
\label{inee}
\end{align}

Now we work on the expression $I_1$ to find a low bound. We estimate $I_1$ by
\begin{align}
I_1\geq J_1-C \int \tau^2 e^{\ep \rh} v^2 \sqrt{\gamma}+C\ep^2\tau \int e^{\ep \rh}|\nabla_\theta v|^2 \sqrt{\gamma},
\label{decom}
\end{align}
where
$$J_1= \|\mathcal{Q}_\tau^2(v)\|^2-\int A_1^{'}(\rh)|\partial_\rh v|^2\sqrt{\gamma}+2\int a'(\rh)\partial_\rh v\partial_t v \sqrt{\gamma}  $$
and we have used the estimate (\ref{inee}).
Recall the definition of $\mathcal{Q}_\tau^2$ in (\ref{back1}), we rewrite $J_1$ as
\begin{equation}
J_1= \int \{ |A_1(\rh)\partial_\rh v-a(\rh)\partial_t v|^2 -A_1^{'}(\rh)|\partial_\rh v|^2+ 2a'(\rh) \partial_t v \partial_\rh v\} \sqrt{\gamma}.
\end{equation}

To estimate $J_1$, let
 $$\alpha=\partial_\rh v \quad \mbox{and} \quad \beta=\partial_t v.$$
Introduce the expression $R(\rh, \tau; \alpha, \beta)$ following from \cite{V02} and \cite{V03} as
\begin{equation}
R(\rh, \tau; \alpha, \beta)=|A_1(\rh)\alpha-a(\rh)\beta|^2- A_1^{'}(\rh)\alpha^2+2 a'(\rh)\alpha\beta.
\label{formu}
\end{equation}

We claim that
\begin{equation}
 R(\rh, \tau; \alpha, \beta)\geq \tau |\alpha|^2+\frac{e^{5\rh}}{2\tau} |\beta|^2.
\end{equation}
From the definition of $a(\rh)$ in (\ref{aaa}), we obtain that
\begin{equation}
\frac{a'(\rh)}{a(\rh)}=\frac{2\ep^2+2(1+\ep e^{\ep \rh})f'(\rh)}{1+\ep e^{\ep \rh}}.
\label{denot}
\end{equation}
Since $f'(\rh)=1+\ep e^{\ep \rh}$, it can be shown that
\begin{equation}
|\frac{a'(\rh)}{a(\rh)}-2|\leq 4\ep.
\label{cut}
\end{equation}
On one hand, we reorganize (\ref{formu}) as
\begin{equation}
R(\rh, \tau; \alpha, \beta)=[(A_1(\rh)-\frac{a'(\rh)}{a(\rh)})\alpha-a(\rh)\beta]^2+\alpha^2[A_1^2(\rh)-A_1^{'}(\rh)-(A_1(\rh)-\frac{a'(\rh)}{a(\rh)})^2].
\end{equation}
Recall the definition of $A_1(\rh)$ in (\ref{A1A1}), using the estimates (\ref{denot}) and (\ref{cut}), we arrive at
\begin{equation}
R(\rh, \tau; \alpha, \beta)\geq 4\tau\alpha^2
\label{def1}
\end{equation}
for every $\alpha, \beta \in \mathbb R^2$, $\rh<-C$ and $\tau>C$.

On the other hand, we rewrite $R(\rh, \tau; \alpha, \beta)$ as
\begin{equation}
R(\rh, \tau; \alpha, \beta)=[A_1(\rh)\alpha-(a(\rh)- \frac{a'(\rh)}{A_1(\rh)})\beta]^2+ \frac{a'(\rh) a(\rh)}{A_1(\rh)}(2-\frac{a'(\rh) }{A_1(\rh)a(\rh)})\beta^2- A_1^{'}(\rh)\alpha^2.
\end{equation}
Using the definition of $A_1(\rh)$ in (\ref{A1A1}) and $a(\rh)$ in (\ref{aaa}), taking  the estimates (\ref{denot}) and (\ref{cut}) into considerations gives that
\begin{equation}
R(\rh, \tau; \alpha, \beta)\geq \frac{e^{5\rh}}{\tau}\beta^2-C\ep^2 \alpha^2
\label{def2}
\end{equation}
for every $\alpha, \beta \in \mathbb R^2$, $\rh<-C$ and $\tau>C$.
Combining the estimates (\ref{def1}) and (\ref{def2}), we have shown the claim.
That is, we have arrived at the estimates
\begin{equation}
\int R(\rh, \tau; v_\rh, v_t)\sqrt{\gamma}\geq  C\tau \int |v_\rh|^2 \sqrt{\gamma}+ \frac{1}{\tau} \int |\partial_t v|^2 e^{5\rh}\sqrt{\gamma}
\label{RRR}
\end{equation}
for  $\rh<-C$ and $\tau>C$. Together with (\ref{decom}), we conclude that
\begin{align}
I_1 &\geq C \tau \int |v_\rh|^2 \sqrt{\gamma}+ \frac{1}{\tau} \int |\partial_t v|^2 e^{5\rh}\sqrt{\gamma}+C\ep^2\tau \int |\nabla_\theta v|^2
e^{\ep \rh} \sqrt{\gamma} \nonumber \\
&-C \int \tau^2 e^{\ep \rh} v^2 \sqrt{\gamma}.
\label{III}
\end{align}

Next, we estimate the term $\|\mathcal{Q}_\tau^{1}(v)\|^2$ in (\ref{compren}). For some $\eta>0$, it is true that
\begin{align}
\int |\mathcal{Q}_\tau^{1}(v)|^2\sqrt{\gamma} &=\int \big( \mathcal{Q}_\tau^{1}(v)-\eta \tau v e^{\ep \rh} +\eta \tau v e^{\ep \rh}\big)^2\sqrt{\gamma} \nonumber\\
&\geq 2\eta \tau \int \big(\mathcal{Q}_\tau^{1}(v)-\eta \tau e^{\ep \rh}v\big) v e^{\ep \rh} \sqrt{\gamma}.\label{trivi}
\end{align}
Recall that $\mathcal{Q}_\tau^1$ in (\ref{back}) as
$$\mathcal{Q}_\tau^1 (v)=\partial_\rh^2 v+A_0(\rh) v-a(\rh) \tilde{V}(e^{f(\rh)}, \theta, t)v+ (1+\ep e^{\ep \rh})^2\triangle_{\theta} v.  $$
The inequality (\ref{trivi}) yields that
\begin{align}
\int |\mathcal{Q}_\tau^{1}(v)|^2\sqrt{\gamma}&\geq  2\eta \tau \int \{\partial_\rh^2 v+A_0(\rh) v-a(\rh) \tilde{V}(e^{f(\rh)}, \theta, t)v+ (1+\ep e^{\ep \rh})^2\triangle_{\theta} v \nonumber \\
&-\eta \tau e^{\ep \rh}v \} v e^{\ep \rh} \sqrt{\gamma}.
\label{essen}
\end{align}

To find a lower bound of $\|\mathcal{Q}_\tau^{1}(v)\|^2$, we estimate each term in the right hand side of (\ref{essen}). From integration by parts argument, it follows that
\begin{align}
2\eta \tau \int \partial_\rh^2 v v e^{\ep \rh} \sqrt{\gamma}&=-2\eta \tau \int |\partial_\rh v|^2 e^{\ep \rh} \sqrt{\gamma}
-2\eta \ep \tau \int \partial_\rh v v  e^{\ep \rh} \sqrt{\gamma}
-2\eta \tau \int \partial_\rh v v  e^{\ep \rh} \partial_\rh (\ln \sqrt{\gamma})\sqrt{\gamma}\nonumber \\
&\geq  -4\eta \tau \int |\partial_\rh v|^2 e^{\ep \rh} \sqrt{\gamma}-4\eta \tau \int  v^2 e^{\ep \rh} \sqrt{\gamma},
\label{Q1}
\end{align}
where we have used Cauchy-Scharwtz inequality and the estimate (\ref{often}).
By the definition of $A_0(\rh)$ in (\ref{A0A0}) and $a(\rh)$ in (\ref{aaa}), we have
\begin{align}
2\eta \tau  \int \big [A_0(\rh) v-a(\rh) \tilde{V}(e^{f(\rh)}, \theta, t)v -\eta \tau v e^{\ep \rh}\big] v  e^{\ep \rh} \sqrt{\gamma}
\geq 2\eta \tau \int (\tau^2- \|\tilde{V}\|_{C^{1,1}}) v^2  e^{\ep \rh} \sqrt{\gamma}.
\label{Q2}
\end{align}
The integration by parts argument shows that
\begin{equation}
2\eta \tau  \int (1+\ep e^{\ep \rh})^2\triangle_{\theta} v v  e^{\ep \rh} \sqrt{\gamma}\geq
-2\eta \tau  \int (1+\ep e^{\ep \rh})^2 |\nabla_\theta v|^2 e^{\ep \rh} \sqrt{\gamma}.
\label{Q3}
\end{equation}
Since we have assumed that $\tau >C (1+\|\tilde{V}\|_{C^{1,1}}^{\frac{1}{2}})$, from the inequalities (\ref{Q1})--(\ref{Q3}), we obtain that
\begin{align}
\int |\mathcal{Q}_\tau^{1}(v)|^2\sqrt{\gamma} & \geq \eta \tau^3 \int v^2 e^{\ep \rh} \sqrt{\gamma}- 4\eta \tau  \int |\partial_\rh v|^2 e^{\ep \rh} \sqrt{\gamma} \nonumber \\
&-2\eta \tau  \int (1+\ep e^{\ep \rh})^2 |\nabla_\theta v|^2 e^{\ep \rh} \sqrt{\gamma}.
\end{align}
Choosing the small $\eta$ such that $\eta=\frac{C\ep^2}{4}$, taking the estimates (\ref{III}) and the last inequality into account
 yields that
\begin{align}
\|\mathcal{Q}_\tau^{1}(v)\|^2+I_1&\geq  C\tau^3 \int  v^2 e^{\ep \rh} \sqrt{\gamma}+C\tau  \int |\partial_\rh v|^2 e^{\ep \rh} \sqrt{\gamma} \nonumber \\
&+C\tau  \int |\nabla_\theta v|^2 e^{\ep \rh} \sqrt{\gamma}+\frac{1}{\tau}\int |\partial_t v|^2 e^{5\rh}\sqrt{\gamma}.
\label{key}
\end{align}
To estimate $\|\mathcal{Q}_\tau(v)\|$ in (\ref{compren}), we are left with $I_2$ in (\ref{I2I2}). Our goal is to control $I_2$  by the right hand side of (\ref{key}).
As before, we estimate each term in the right hand side of $I_2$ in (\ref{I2I2}) by integration by parts argument. It is clear that
\begin{equation}
|\int A_1(\rh)|\partial_\rh v|^2 \partial_\rh (\ln \sqrt{\gamma})\sqrt{\gamma}|\leq C\tau  \int |\partial_\rh v|^2 e^{ \rh} \sqrt{\gamma},
\label{I21}
\end{equation}
since $ \partial_\rh (\ln \sqrt{\gamma})\leq C e^\rh$ and $0<\rh<-|\rh_0|$ with  $|\rh_0|$ sufficiently large.
By the Young's inequality,
\begin{equation}
2|\int a(\rh)\partial_t v \partial_\rh v \partial_\rh (\ln \sqrt{\gamma})\sqrt{\gamma} |\leq \frac{\delta}{\tau}\int |\partial_t v|^2 e^{5\rh}\sqrt{\gamma}+
C(\delta)
\tau  \int |\partial_\rh v|^2 e^{ \rh} \sqrt{\gamma}.
\end{equation}
If we choose $\delta$ to be small, since $\rh$  is sufficiently close to negative infinity, then
\begin{equation}
2|\int a(\rh)\partial_t v \partial_\rh v \partial_\rh (\ln \sqrt{\gamma})\sqrt{\gamma}|\leq  \frac{1}{2\tau}\int |\partial_t v|^2 e^{5\rh}\sqrt{\gamma}+
C
\tau  \int |\partial_\rh v|^2 e^{ \rh} \sqrt{\gamma}.
\label{I22}
\end{equation}
It is obvious that
\begin{equation}
2|\int \big(A_1(\rh) A_0(\rh)\big)v^2 \partial_\rh (\ln \sqrt{\gamma})\sqrt{\gamma}|\leq C\tau^3 \int v^2  e^{ \rh} \sqrt{\gamma}.
\label{I23}
\end{equation}
Since it is assumed that $\tau >C (1+\|\tilde{V}\|_{C^{1,1}}^{\frac{1}{2}})$, then
\begin{equation}
\tau (1+  \|\tilde{V}\|_{C^{1,1}})\int  v^2  e^{ \rh} \sqrt{\gamma} \leq C\tau^3 \int  v^2  e^{ \rh} \sqrt{\gamma}.
\label{I24}
\end{equation}
Together with the inequalities (\ref{I21})--(\ref{I24}), we derive that
\begin{align}
2|I_2| \leq &C\tau^3 \int v^2 e^{\ep \rh} \sqrt{\gamma}+C\tau  \int |\partial_\rh v|^2 e^{\ep \rh} \sqrt{\gamma} \nonumber \\
&+C\tau  \int |\nabla_\theta v|^2 e^{\ep \rh} \sqrt{\gamma}+\frac{1}{\tau}\int |\partial_t v|^2 e^{5\rh}\sqrt{\gamma}.
\label{come}
\end{align}
Hence, $I_2$ can be controlled above by the right hand side of (\ref{key}). Taking advantage of (\ref{compren}), (\ref{key}) and (\ref{come}) together, we obtain that
\begin{equation}
\int |\mathcal{Q}_\tau (v)|^2 \sqrt{\gamma}\geq  C\tau^3 \int v^2 e^{\ep \rh} \sqrt{\gamma}+C\tau  \int |\partial_\rh v|^2 e^{\ep \rh} \sqrt{\gamma}
+C\tau  \int |\nabla_\theta v|^2 e^{\ep \rh} \sqrt{\gamma}.
\label{keyy}
\end{equation}

At last, we deal with $\tilde{{\mathcal{Q}}}(v)$ in (\ref{below}). Since $\rh$ is close to negative infinity, for any fixed $0<
\ep<1$, it is easy to see that
\begin{align}
\int |\tilde{{\mathcal{Q}}}(v)|^2 \sqrt{\gamma} \leq C \tau^2 \int v^2 e^{ \rh} \sqrt{\gamma}
+C \int |\partial_\rh v|^2 e^{ \rh} \sqrt{\gamma},\label{rebelow}
\end{align}
which can be bounded  by the the right hand side of (\ref{keyy}).

Recall that in polar coordinates $(r, \theta)$ the volume element is
$r^{n-1}\sqrt{\gamma} dr d\theta$ and $\frac{1}{r}dr\approx d \rh$ as $\rh$ close to negative infinity.
From (\ref{basic}) and (\ref{keyy}),
 we have shown that, for any $u\in C^{\infty}_0\big (Q^T_{r_0}(x_0) \backslash \big\{\{ x_0\}\times(-T , \ T)\big\}\big)$  and $\tau > C (1+ \|\tilde{V}\|_{C^{1,1}}^{\frac{1}{2}})$,
\begin{align*}
&\int_{Q^T_{r_0}}\big(\triangle u-\partial_t u-\tilde{V}(r, \theta ,t)u\big)^2 e^{-2\tau g(r)} r^{4-n} d v_g dt \nonumber \\ &\geq C\int
(\tau r^2|\nabla u|^2 +\tau^3 u^2) e^{(-2\tau+\ep)g(r)} r^{-n} d v_g dt,
\end{align*}
where $g(r)=f^{-1}(\ln r)$. We arrive at the proof of Theorem \ref{thCarle}.

\end{proof}

\section{Three cylinder  inequalities}
The $L^2$ type three cylinder inequalities for parabolic equations have been established in e.g. \cite{EVe03}, \cite{V03} for the proof of the strong
unique continuation property.
In this section, we will derive the quantitative $L^\infty$ type three cylinder inequalities from  Carleman estimates in the last section. The norms of the coefficient functions in (\ref{goal}) or (\ref{goal2}) are explicitly characterized, which is crucial in showing the vanishing order.
The standard way is to apply those Carleman estimates to $u(x,t)\xi(x, t)$ where $\xi(x, t)$ is an appropriate cut-off function, $u(x, t)$ is a solution,  and then make an appropriate choice of the parameter $\tau$.  Recall that $r_0$ is the geodesic distance in the Carleman estimates in the last section, $0<\ep<1$ is some fixed constant and $T\in (-1, 1)$. We state the three cylinder inequality for parabolic equation (\ref{goal}) as follows.
\begin{lemma}
Let $0<3r_1<r_2<\frac{r_3}{2}<\frac{r_0}{4}$ and $u$ be a solution to \eqref{goal}. There exist a positive constant $C$ depending only on $\mathcal{M}$ and $\ep$ such that
\begin{align}
\|u\|_{L^\infty(Q^{T/2}_{r_2})}&\leq C  r_2^{-\frac{\ep}{2}}(\frac{r_0^2}{T}+1) M^{\frac{n+4}{4}}
\|u\|_{L^\infty(Q^{T}_{2r_1})}^{k_0} \|u\|_{L^\infty(Q^{T}_{r_3})}^{1-k_0} \nonumber \\ &+C(\frac{r_3}{r_2})^{\frac{n}{2}}M^{\frac{n+2}{4}} \exp \{ CM^\frac{1}{2} \big( g(\frac{r_3}{2})-g(r_1)\big)\} \|u\|_{L^\infty(Q^{{T}}_{2r_1})},
\label{infnity}
\end{align}
where $k_0=\frac{g(\frac{r_3}{2})-g(r_2)}{g(\frac{r_3}{2})-g(r_1)}.$
\label{lem2}
\end{lemma}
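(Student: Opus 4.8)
The plan is to derive the three cylinder inequality by applying the Carleman estimate of Theorem \ref{thCarle} to the function $w = u\xi$, where $u$ solves \eqref{goal} and $\xi = \xi(x)$ is a smooth spatial cutoff adapted to the three radii $r_1 < r_2 < r_3$. First I would fix a cutoff $\xi$ that is supported in the geodesic annulus/ball region $\mathbb B_{r_3/2}(x_0) \setminus \overline{\mathbb B_{r_1}(x_0)}$ — so that $w$ has the compact support required in Theorem \ref{thCarle}, away from the singular locus $\{x_0\}\times(-T,T)$ — with $\xi \equiv 1$ on the annulus $\{2r_1 \le r \le r_3\}$ wait, more precisely $\xi\equiv 1$ on $\{2r_1 \le r \le r_3\}$ is not compatible with support in $r\le r_3/2$; rather $\xi\equiv 1$ on $\{2r_1 \le r \le r_2\}$ and vanishing for $r\le r_1$ and for $r\ge r_3/2$, with the usual derivative bounds $|\nabla\xi|\lesssim r_1^{-1}$, $|\nabla^2\xi|\lesssim r_1^{-2}$ near the inner ring and $|\nabla\xi|\lesssim (r_3)^{-1}$, $|\nabla^2\xi|\lesssim r_3^{-2}$ near the outer ring. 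I also need a temporal cutoff $\chi(t)$, equal to $1$ on $(-T/2,T/2)$ and supported in $(-T,T)$, with $|\chi'|\lesssim T^{-1}$, so that $w=u\,\xi(x)\chi(t)$ is genuinely compactly supported in $Q^T_{r_0}(x_0)\setminus(\{x_0\}\times(-T,T))$.

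Next I would compute $\mathcal L w = \triangle w - \partial_t w - \tilde V w$. Since $\mathcal L u = 0$, the terms in which no derivative hits the cutoffs cancel, leaving a commutator supported only where $\nabla\xi\ne 0$ or $\chi'\ne 0$, i.e. on the inner ring $\{r_1\le r\le 2r_1\}$, the outer ring $\{r_2\le r\le r_3/2\}$ wait — more precisely $\{r_2 \le r \le r_3/2\}$ is where $\xi$ transitions; let me keep it as the outer transition region $\{r_2 \le r \le r_3/2\} \subset \{r \le r_3\}$, and the time slabs $\{T/2 \le |t| \le T\}$. Schematically $|\mathcal L w| \lesssim \mathbf 1_{\text{inner}}\big(r_1^{-2}|u| + r_1^{-1}|\nabla u|\big) + \mathbf 1_{\text{outer}}\big(r_3^{-2}|u| + r_3^{-1}|\nabla u|\big) + \mathbf 1_{\text{time slab}}\,T^{-1}|u|$. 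I then plug $w$ into \eqref{Carle} with the admissible parameter $\tau \ge C(1+M^{1/2})$ (using $\|\tilde V\|_{C^{1,1}}\le M$), giving a lower bound on the left side by the weighted $L^2$ norms of $u$ (and $\nabla u$) over $\{2r_1\le r\le r_2\}\times(-T/2,T/2)$, where $\xi\chi\equiv 1$. On that region I bound the weight $e^{-2\tau g(r)}r^{-n}$ from below using the monotonicity of $g$: $g$ is increasing in $r$ (since $g(r)=f^{-1}(\ln r)$ and $f$ is increasing), so on $r\le r_2$ we have $e^{-\tau g(r)} \ge e^{-\tau g(r_2)}$ roughly, and I keep a factor $r_2^{-\ep/2}$ honest as in the statement. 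For the right side I split the three commutator contributions and bound each weight by its extreme value on the corresponding region: on the inner ring the weight is $\lesssim e^{-2\tau g(r_1)}$ times powers of $r_1$, on the outer ring $\lesssim e^{-2\tau g(r_3/2)}$ times powers of $r_3$, and on the time slab $\lesssim e^{-2\tau g(r_2)}$ (or the max over the support) times powers.

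The standard "propagation of smallness" step comes next: after dividing through by $e^{-2\tau g(r_2)}$ (the weight value associated with $r_2$) and using interior Caccioppoli/elliptic-parabolic estimates to replace $\|\nabla u\|_{L^2}$ by $\|u\|_{L^\infty}$ on slightly larger cylinders (picking up the $M$-dependent factors $M^{(n+4)/4}$, $M^{(n+2)/4}$ that appear in \eqref{infnity}, and the $(r_0^2/T+1)$ factor from the time cutoff), I arrive at an inequality of the shape
\begin{equation*}
\|u\|_{L^\infty(Q^{T/2}_{r_2})} \lesssim A\, e^{\tau(g(r_2)-g(r_1))}\|u\|_{L^\infty(Q^T_{2r_1})} + B\, e^{-\tau(g(r_3/2)-g(r_2))}\|u\|_{L^\infty(Q^T_{r_3})} + (\text{time-slab term}),
\end{equation*}
valid for all $\tau\ge C(1+M^{1/2})$, where $A,B$ carry the polynomial-in-$M$, $r_j$, $r_0/T$ prefactors. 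I then optimize in $\tau$. If the unconstrained optimizer $\tau_* = \frac{\log(B\|u\|_{L^\infty(Q^T_{r_3})}/(A\|u\|_{L^\infty(Q^T_{2r_1})}))}{g(r_3/2)-g(r_1)}$ exceeds the threshold $C(1+M^{1/2})$, substituting it balances the first two terms and produces exactly the interpolation exponent $k_0 = \frac{g(r_3/2)-g(r_2)}{g(r_3/2)-g(r_1)}$, i.e. a bound $\lesssim A^{1-k_0}B^{k_0}\|u\|^{k_0}_{L^\infty(Q^T_{2r_1})}\|u\|^{1-k_0}_{L^\infty(Q^T_{r_3})}$, which is the first term of \eqref{infnity}. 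If instead $\tau_*$ is below the threshold (the "small data" regime), I am forced to take $\tau = C(1+M^{1/2}) \approx CM^{1/2}$, and the first term of the displayed inequality becomes $\lesssim A\,\exp\{CM^{1/2}(g(r_2)-g(r_1))\}\|u\|_{L^\infty(Q^T_{2r_1})} \le A\,\exp\{CM^{1/2}(g(r_3/2)-g(r_1))\}\|u\|_{L^\infty(Q^T_{2r_1})}$, which is precisely the second term of \eqref{infnity}; in this regime the $\|u\|_{L^\infty(Q^T_{r_3})}$ term is negligible. Collecting the two cases gives \eqref{infnity}.

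I expect the main obstacle to be the bookkeeping of the weight factors and the $M$-dependent polynomial prefactors: one must track how $r^{(4-n)/2}$, $r^{(2-n)/2}$, $r^{-n/2}$, $e^{\pm\tau g(r)}$ and the $\tau$-powers $\tau^{1/2},\tau^{3/2}$ interact when restricted to each of the three transition regions, then convert the resulting $L^2$-on-cylinders inequality into the $L^\infty$ form using parabolic interior estimates — the latter is where the exponents $\frac{n+4}{4}$ and $\frac{n+2}{4}$ on $M$ enter, since the gradient and second-derivative bounds for \eqref{goal} scale like powers of $M$. A secondary technical point is justifying that $w=u\xi\chi$, which is only as smooth as $u$ (i.e. smooth by parabolic regularity for $C^{1,1}$ potentials), may be inserted into Theorem \ref{thCarle}, which is stated for $C^\infty_0$ functions; this is handled by a routine density/mollification argument, so I would simply note it. Everything else — the integration by parts, the case split on $\tau_*$, and the identification of $k_0$ — is the now-classical three-ball/three-cylinder scheme and should go through without surprises.
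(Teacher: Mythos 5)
Your overall architecture --- multiply by a cutoff, insert into the Carleman estimate \eqref{Carle}, split the commutator over an inner ring, an outer ring and the time slabs, use Caccioppoli for the gradient terms, optimize in $\tau$ with the two-case split that produces $k_0$, and finish with a local $L^2\to L^\infty$ estimate --- is exactly the paper's. But there is a genuine gap at the time cutoff, and it is the one step of this proof that is not ``the now-classical three-ball scheme.'' With a generic temporal cutoff $\chi$ satisfying only $|\chi'|\lesssim T^{-1}$, the commutator term $\chi' u\,\xi$ is supported on the \emph{full spatial annulus} $\{r_1\lesssim r\lesssim r_3/2\}$ during the time slabs $\{T/2\le|t|\le T\}$. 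There the weight $e^{-2\tau g(r)}$ is as large as $e^{-2\tau g(r_1)}$ (it is decreasing in $r$), and the only available bound for $u$ on that annulus is the large-cylinder norm. After dividing by the weight $e^{-2\tau g(r_2)}$ attached to the middle cylinder, this contribution is of order $T^{-2}e^{2\tau(g(r_2)-g(r_1))}\|u\|^2_{L^2(Q^T_{r_3})}$: a $\tau$-\emph{growing} exponential multiplying the \emph{large} norm. No choice of $\tau$ can control it, and the interpolation collapses. This is precisely why the paper takes $\varphi_1,\varphi_2$ with super-exponential flatness at the inner edge of the time transition: on the set $D_1$ one writes $L_1=\int_{D_1}E(r,t;\tau)u^2e^{-2\tau g(r)}r^{-n}$ with $E=\varphi^2(\varphi_t^2\varphi^{-2}r^4-\tau^3e^{\ep g(r)})$, uses the good term $-\tau^3e^{\ep g(r)}(u\xi)^2$ from the left side of \eqref{Carle} to absorb $\varphi_t^2u^2r^4$ except on the small region $D'_{1,\tau}$ where \eqref{kubi} holds, and there the cutoff itself satisfies $\varphi_1e^{-2\tau g(r)}r^{-n}\le \exp\{-c(\tau^3T^2/(Cr^{4-\ep}))^{3/8}-(2\tau+n)\ln r\}\le C$, yielding the $\tau$-independent bound $L_1\le C(r_0^2/T)^2\int_{D_1}u^2$, which can then be attached to the decaying term $e^{-2\tau(g(r_3/2)-g(r_2))}\|u\|^2_{L^2(Q^T_{r_3})}$. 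Without this (or an equivalent device), your proof does not close.

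A secondary but also substantive point: you place the outer spatial transition on $\{r_2\le r\le r_3/2\}$ while asserting its weight is $\lesssim e^{-2\tau g(r_3/2)}$. Since $e^{-2\tau g(r)}$ is decreasing in $r$, the supremum of the weight on that region is $e^{-2\tau g(r_2)}$, which after normalization gives \emph{no} decay in $\tau$ on the large-ball term and again kills the optimization. The transition must sit strictly outside the middle radius --- the paper takes $\psi\equiv1$ up to $r_3/2$ and lets it die on $[r_3/2,3r_3/4]$ --- so that the factor $e^{-2\tau(g(r_3/2)-g(r_2))}$ genuinely appears. Both issues are about where the cutoffs transition relative to where the weight is evaluated; the rest of your outline (the $\tau_*$ formula, the identification of $k_0$, the $M$-power bookkeeping via \eqref{cacci} and \eqref{lllin}) matches the paper.
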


For the parabolic equation (\ref{goal2}) with bounded coefficient functions, we can establish the following  three cylinder inequality.
\begin{lemma}
Let $0<3r_1<r_2<\frac{r_3}{2}<\frac{r_0}{4}$ and $u$ be a solution to \eqref{goal2}. There exist a positive constant $C$ depending only on $\mathcal{M}$ and $\ep$ such that
\begin{align}
\|u\|_{L^\infty(Q^{T/2}_{r_2})}&\leq C  r_2^{-\frac{\ep}{2}}(\frac{r_0^2}{T}+1) (M_0^{\frac{1}{2}}+M_1)^{\frac{n+4}{2}}
\|u\|_{L^\infty(Q^{T}_{2r_1})}^{k_0} \|u\|_{L^\infty(Q^{T}_{r_3})}^{1-k_0} \nonumber \\ &+C(\frac{r_3}{r_2})^{\frac{n}{2}}(M_0^{\frac{1}{2}}+M_1)^{\frac{n+2}{2}} \exp \{ C(M_0^\frac{2}{3}+M_1^2) \big( g(\frac{r_3}{2})-g(r_1)\big)\} \|u\|_{L^\infty(Q^{{T}}_{2r_1})},
\label{infnity1}
\end{align}
where $k_0=\frac{g(\frac{r_3}{2})-g(r_2)}{g(\frac{r_3}{2})-g(r_1)}.$
\label{lem3}
\end{lemma}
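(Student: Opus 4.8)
The plan is to mimic the derivation of Lemma \ref{lem2}, but starting from the Carleman estimate \eqref{Carle2} of Theorem \ref{th4} in place of \eqref{Carle}. The first step is to choose a smooth cutoff function $\xi(x,t)$ which is supported in $Q^T_{r_3}(x_0)\setminus\big(\{x_0\}\times(-T,T)\big)$, equals $1$ on an intermediate region (roughly $Q^{T}_{r_2/2}(x_0)\setminus Q^T_{2r_1}(x_0)$ in the spatial variable and on $(-T/2,T/2)$ in time), with derivative bounds $|\nabla^j\xi|\le C r_1^{-j}$ near the inner radius $r_1$, $|\nabla^j\xi|\le C r_3^{-j}$ near the outer radius $r_3$, and $|\partial_t\xi|\le C/T$ in time. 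Then apply \eqref{Carle2} to $w=u\xi$. Since $u$ solves \eqref{goal2}, the left side $(\triangle w-\partial_t w-W\cdot\nabla w-Vw)$ is supported on the region where $\nabla\xi$ or $\partial_t\xi$ is nonzero, and consists of commutator terms $u\triangle\xi+2\nabla u\cdot\nabla\xi-u\partial_t\xi-(W\cdot\nabla\xi)u$, each carrying a factor $r_1^{-2}$, $r_3^{-2}$, $1/T$, $M_1 r_1^{-1}$ or $M_1 r_3^{-1}$. The admissibility condition $\tau>C(1+M_0^{2/3}+M_1^2)$ required by Theorem \ref{th4} is exactly what produces the exponents $M_0^{2/3}+M_1^2$ in \eqref{infnity1}, and the powers $(M_0^{1/2}+M_1)^{(n+4)/2}$, $(M_0^{1/2}+M_1)^{(n+2)/2}$ will come from the polynomial prefactors $\tau^{1/2},\tau^{3/2}$ in \eqref{Carle2} once $\tau$ is taken comparable to $M_0^{2/3}+M_1^2$.

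The second step is the bookkeeping of the weights. Split the support of the commutator into an \emph{inner} shell $\mathbb{B}_{2r_1}\setminus\mathbb{B}_{r_1}$, an \emph{outer} shell $\mathbb{B}_{r_3}\setminus\mathbb{B}_{r_3/2}$, and the time-boundary layers. On the left side one bounds the weighted $L^2$ norm of the commutator from above by a sum of three pieces: $C r_1^{-2}(\tfrac{r_0^2}{T}+1)\,e^{-\tau g(r_1)}\|u\|_{L^\infty(Q^T_{2r_1})}$-type terms (absorbing the $M_1 r_1^{-1}$ and $1/T$ contributions since $\tau^{1/2}r_1^{-1}\ge M_1 r_1^{-1}$ by the choice of $\tau$), plus the analogous outer piece weighted by $e^{-\tau g(r_3/2)}$, plus lower-order terms. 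On the right side \eqref{Carle2} retains $C_2\tau^{3/2}\|u e^{(-\tau+\ep/2)g(r)}r^{-n/2}\|_{L^2}$, and restricting the integral to the region $\mathbb{B}_{r_2}\setminus\mathbb{B}_{3r_1}$ where $\xi\equiv 1$ and using monotonicity of $e^{-\tau g(r)}$ in $r$ (since $g$ is increasing and $r$ is small) gives a lower bound $\ge c\,\tau^{3/2}r_2^{-\ep/2}e^{-\tau g(r_2)}\|u\|_{L^2(Q^{T/2}_{r_2})}$, where the $r_2^{-\ep/2}$ records the gain $e^{(\ep/2)g(r)}\approx r^{\ep/2}$. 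Converting the $L^2$ lower bound on a slightly smaller cylinder to an $L^\infty$ bound on $Q^{T/2}_{r_2}$ uses interior parabolic estimates (the $L^2$--$L^\infty$ comparability mentioned in the introduction), which is where the extra harmless factor $(r_3/r_2)^{n/2}$ and a polynomial power of $M_0^{1/2}+M_1$ enter.

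The third step is the optimization in $\tau$. After rearranging, one has schematically
\[
c\,\tau^{3/2}r_2^{-\ep/2}e^{-\tau g(r_2)}\|u\|_{L^\infty(Q^{T/2}_{r_2})}
\le A\,e^{-\tau g(r_1)}\|u\|_{L^\infty(Q^T_{2r_1})}
+B\,e^{-\tau g(r_3/2)}\|u\|_{L^\infty(Q^T_{r_3})},
\]
with $A$ carrying the $(\tfrac{r_0^2}{T}+1)M_1\!\cdots$ factors and $B$ the outer-shell factors. Dividing through and writing $e^{\tau(g(r_2)-g(r_1))}$ and $e^{-\tau(g(r_3/2)-g(r_2))}$, one chooses $\tau$ to balance the two terms on the right, namely $\tau$ so that $A e^{-\tau(g(r_2)-g(r_1))}\|u\|_{L^\infty(Q^T_{2r_1})}\approx B e^{-\tau(g(r_3/2)-g(r_2))}\|u\|_{L^\infty(Q^T_{r_3})}$; this yields the interpolation exponent $k_0=\frac{g(r_3/2)-g(r_2)}{g(r_3/2)-g(r_1)}$ and the product $\|u\|_{L^\infty(Q^T_{2r_1})}^{k_0}\|u\|_{L^\infty(Q^T_{r_3})}^{1-k_0}$. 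However, this optimal $\tau$ is only admissible if it exceeds the threshold $C(1+M_0^{2/3}+M_1^2)$; when it does not, one instead \emph{forces} $\tau=C(1+M_0^{2/3}+M_1^2)$, and then the first term on the right dominates, producing the second summand of \eqref{infnity1} with its $\exp\{C(M_0^{2/3}+M_1^2)(g(r_3/2)-g(r_1))\}$ factor. Combining the two cases and absorbing all polynomial-in-$\tau$ and geometric constants into $(M_0^{1/2}+M_1)^{(n+4)/2}$, $(M_0^{1/2}+M_1)^{(n+2)/2}$, $r_2^{-\ep/2}(\tfrac{r_0^2}{T}+1)$ and $(r_3/r_2)^{n/2}$ gives \eqref{infnity1}.

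The main obstacle I expect is the careful two-case split on $\tau$ together with the precise tracking of how $M_1$ enters: the gradient commutator term $W\cdot\nabla\xi\,u$ must be absorbed using the $C_1\tau^{1/2}\|\nabla u\,e^{(-\tau+\ep/2)g(r)}r^{(2-n)/2}\|$ term on the right of \eqref{Carle2}, which forces $\tau^{1/2}\gtrsim M_1$, i.e. $\tau\gtrsim M_1^2$; and the zeroth-order potential forces $\tau\gtrsim M_0^{2/3}$. Getting the exponents $\tfrac{n+4}{2}$ and $\tfrac{n+2}{2}$ exactly right (rather than off by a constant), and ensuring the $L^2\to L^\infty$ upgrade on the three cylinders does not hide an extra $M$-dependence beyond these powers, is the delicate bookkeeping part; the structural argument is otherwise identical to that of Lemma \ref{lem2}.
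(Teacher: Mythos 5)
Your proposal follows essentially the same route as the paper: the paper's own proof of Lemma \ref{lem3} is a brief sketch that reuses the cutoff construction and the two-case optimization in $\tau$ from Lemma \ref{lem2}, with the Carleman estimate \eqref{Carle2} in place of \eqref{Carle} and the threshold $C(M_0^{2/3}+M_1^2)$ replacing $CM^{1/2}$, exactly as you describe. The one ingredient you leave implicit is the Caccioppoli inequality for \eqref{goal2} (the paper's \eqref{cacciop}), which is what allows the $\nabla u\cdot\nabla\xi$ commutator piece on the inner and outer shells to be bounded by $\|u\|$ alone and which, together with the local $L^2\to L^\infty$ estimate \eqref{lllin2}, is the actual source of the prefactors $(M_0^{1/2}+M_1)^{\frac{n+4}{2}}$ and $(M_0^{1/2}+M_1)^{\frac{n+2}{2}}$ (they do not arise from substituting $\tau\sim M_0^{2/3}+M_1^2$ into $\tau^{1/2},\tau^{3/2}$); likewise, the absorption forcing $\tau\gtrsim M_1^2$ is already built into the derivation of \eqref{Carle2} in Theorem \ref{th4} and need not be redone here.
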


With aid of the Carleman estimates (\ref{Carle}), we first show the proof of (\ref{infnity}).
\begin{proof}[Proof of Lemma \ref{lem2}]
Choose $0<3r_1<r_2<\frac{r_3}{2}<\frac{r_0}{4}$. We construct a smooth cut-off
function $\xi(x, t)=\psi(r)\varphi(t)$. We select $\psi(r)\in C^\infty_0(\mathbb B_{\frac{r_0}{2}})$ such that $\psi(r)=1$ in
$[\frac{3r_1}{2}, \ \frac{r_3}{2}]$ and $\psi(r)=0$ in
$[0, \ r_1]\cup [\frac{3r_3}{4}, \ r_3]$. Then $$|\nabla \psi|\leq \frac{C}{r_1} \quad \mbox{in} \ [r_1, \ \frac{3r_1}{2}] \quad \mbox{and} \quad |\nabla \psi|\leq \frac{C}{r_3} \quad \mbox{in} \ [\frac{r_3}{2}, \ \frac{3r_3}{4}].$$

We also select a cut-off function with respect to $t$ variable and adapt the arguments in \cite{V03}. Let $T_1=\frac{5T}{6}$ and $T_2=\frac{2T}{3}$.
Select $\varphi(t)$ be a even function such that
  $\varphi(t)\in C^\infty_0(-T, \ T)$.
  Set $\varphi(t)=1$ in $[-T_2, ,  \ T_2]$,  $\varphi(t)=0$ in $[-T,  \ -T_1]\cup [T_1, \ T]$.
  Define
  \begin{equation}
  \varphi(t)= \left\{ \begin{array}{lll}  \varphi_1(t) \quad \quad t\in (-T_1, \ -T_2), \nonumber \medskip \\
  \varphi_2(t) \quad \quad t\in (T_2, \ T_1), \nonumber
  \end{array}
      \right.
  \end{equation}
  where
$$ \varphi_1(t)=\exp \{ -\frac{ T^3(T_2+t)^4}{ (T_1+t)^3(T_1-T_2)^4}\} $$
and
$$ \varphi_2(t)=\exp \{ -\frac{ T^3(t-T_2)^4}{ (T_1-t)^3(T_1-T_2)^4}\}. $$
We can check that
$$|\varphi'(t)|\leq \frac{C}{T} \ \mbox{in} \  (-T_1, \ -T_2)\cup (T_2, \ T_1).$$
Now we define the following sets
\begin{align*}
&D_1^{'}=\{ (x, t)\in Q^T_{r_0} | \frac{3r_1}{2}<r<\frac{r_3}{2}, \quad  t\in [-T_1, \ -T_2] \}\\
&D_1^{''}=\{ (x, t)\in Q^T_{r_0} | \frac{3r_1}{2}<r<\frac{r_3}{2}, \quad  t\in [T_2, \ T_1] \} \\
&D_2=\{ (x, t)\in Q^T_{r_0}  | {r_1}<r<\frac{3r_1}{2}, \quad t\in [-T_1, \ T_1] \}, \\
&D_3=\{ (x, t)\in Q^T_{r_0}  | \frac{r_3}{2}<r<\frac{3r_3}{4}, \quad t\in [-T_1, \ T_1] \},\\
&D_4=\{ (x, t)\in Q^T_{r_0} | \frac{3r_1}{2}<r<\frac{r_3}{2}, \quad t\in [-T_2, \ T_2] \}, \\
&D_1= D_1^{'}\cup D_1^{''}.
\end{align*}
 Note that $\xi(x, t)\equiv 1$ on $D_4$, i.e. $u(x, t) \xi(x, t)\equiv u(x, t)$ on $D_4$. On $Q^T_{r_0} \backslash \cup_{i=1}^4 D_i$, $\xi(x, t)\equiv 0$. Then,
$u(x, t) \xi(x, t)\equiv 0$ on $Q^T_{r_0} \backslash \cup_{i=1}^4 D_i$. By the standard regularity argument,
Choosing $u(x, t)\xi(x, t)$ as the test function in the Carleman estimates (\ref{Carle}) yields that
\begin{align*}
\int &\big(\tau r^2|\nabla (u\xi) |^2+\tau^3 (u\xi) ^2\big) e^{(-2\tau+\ep)g(r)} r^{-n} d v_g dt
\\
&\leq C\int_{Q^T_{r_0}}\big(\triangle (u\xi )-\partial_t (u\xi )-\tilde{V}(r, \theta ,t)u\xi \big)^2 e^{-2\tau g(r)} r^{4-n} d v_g dt.
\end{align*}
It follows that
\begin{align}
&\tau^3 \int_{ D_4\cup D_1}  (u\xi)^2 e^{(-2\tau+\ep)g(r)} r^{-n} d v_g dt \nonumber \\
&\leq C  \int_{{\cup^4_{i=1}D_i}}\big(\triangle (u\xi )-\partial_t(u\xi )-\tilde{V}(r, \theta ,t)u\xi \big)^2 e^{-2\tau g(r)} r^{4-n} d v_g dt.
\label{lat1}
\end{align}
From the definition of $\xi(x, t)$, we obtain that
\begin{align}
&\tau^3 \int_{ D_4}  u^2 e^{(-2\tau+\ep)g(r)} r^{-n} d v_g dt \nonumber \\
&\leq C  \int_{{\cup^4_{i=2}D_i}}\big(\triangle (u\xi )-\partial_t(u\xi )-\tilde{V}(r, \theta ,t)u\xi \big)^2 e^{-2\tau g(r)} r^{4-n} d v_g dt+L_1,
\label{lat}
\end{align}
where
\begin{align*}L_1 &=  \int_{D_1}\big(\triangle (u\xi )-\partial_t(u\xi )-\tilde{V}(r, \theta ,t)u\xi \big)^2 e^{-2\tau g(r)} r^{4-n} d v_g dt \nonumber \\
&-\tau^3 \int_{ D_1}  (u\xi)^2 e^{(-2\tau+\ep)g(r)} r^{-n} d v_g dt.
\end{align*}
We investigate each integral in the right hand side of inequality (\ref{lat}). From the equation (\ref{goal}) itself,  on the domain $D_4$, we obtain that
\begin{align}
  &\int_{D_4}\big(\triangle (u\xi )-\partial_t(u\xi)-\tilde{V}(r, \theta ,t)u\xi \big)^2 e^{-2\tau g(r)} r^{4-n} d v_g dt \nonumber\\
  &=\int_{D_4}\big(\triangle \xi u+ 2\nabla \xi\cdot \nabla u +\xi\triangle u-\xi_t u-\xi u_t-\tilde{V}(r, \theta ,t)u\xi \big)^2 e^{-2\tau g(r)} r^{4-n} d v_g dt   \nonumber\\
  &=0,
\label{latter}
\end{align}
 since $\nabla \xi=0$ and $\xi_t=0$ in $D_4$. We exam the integral $L_1$. On the domain $D_1$, it holds that $\psi(r)=1$. Considering $u$ is the solution of the equation (\ref{goal}), it follows that
\begin{align}
  &\int_{D_1}\big(\triangle (u\xi )-\partial_t(u\xi )-\tilde{V}(r, \theta ,t)u\xi \big)^2 e^{-2\tau g(r)} r^{4-n} d v_g dt \nonumber\\
  &=\int_{D_1}\big(\triangle \xi u+ 2\nabla \xi\cdot \nabla u +\xi\triangle u-\xi_t u-\xi u_t-\tilde{V}(r, \theta ,t)u\xi \big)^2 e^{-2\tau g(r)} r^{4-n} d v_g dt   \nonumber\\
  &=\int_{D_1}\varphi_t^2 u^2 e^{-2\tau g(r)} r^{4-n} d v_g dt
\label{estiD1}
\end{align}

 From the inequality (\ref{estiD1}), it follows that
$$ L_1 =  \int_{D_1}E(r, t;\tau) u^2  e^{-2\tau g(r)} r^{-n} d v_g dt, $$
where
$$ E(r, t;\tau) = \varphi^2(\frac{\varphi_t^2}{\varphi^2} r^4-\tau^3 e^{\ep g(r)}). $$
We first work on the domain $D_1^{'}$ with $\varphi(t)=\varphi_1(t)$. Calculations show that
$$ \varphi_1^{'}(t)=\varphi_1(t)\frac{-T^3(T_2+t)^3(4T_1-3T_2+t)}{(T_1-T_2)^4(T_1+t)^4}. $$
Since $g(r)\approx \ln r$ as $r\to 0$,  we have
$$ E(r, t;\tau) \leq \tau^3 r^\ep \varphi_1^2( \frac{Cr^{4-\ep} T^6}{(T_1+t)^8 \tau^3}-\frac{1}{2}) $$
for some fixed $0<\ep<1$. Furthermore, we introduce the set
$$ D^{'}_{1, \tau}=\{ (x, t)\in  D^{'}_{1}| -\frac{1}{2}+ \frac{Cr^{4-\ep} T^6}{(T_1+t)^8 \tau^3}\geq 0\}. $$
In the region $D^{'}_{1, \tau}$,
\begin{equation} \tau^3\leq  \frac{Cr^{4-\ep} T^6}{(T_1+t)^8}.
\label{kubi}
\end{equation}
It is true that
 \begin{align}
 \int_{D_1^{'}}E(r, t;\tau) u^2  e^{-2\tau g(r)} r^{-n} d v_g dt \leq \frac{C}{T^2}\int_{D_{1,\tau}^{'}} \varphi_1 u^2
e^{-2\tau g(r)} r^{4-n}d v_g dt,
\label{twos}
\end{align}
where we have used the fact that
$$\sup_{[\frac{4}{5}, \ 1)}(1-s)^{-8} \exp\{ -(1-s)^{-3}(\frac{4}{5}-s)^4\} \leq C $$
and $\frac{T_2}{T_1}=\frac{4}{5}$. From (\ref{kubi}) at the region $D_{1,\tau}^{'}$, it follows that
\begin{equation}
\frac{T_1+t}{T}\leq (\frac{C r^{4-\ep}}{T^2\tau^3})^{\frac{1}{8}}.
\label{dodo} \end{equation}
Now we choose $\tau> (Cr_0^{4-\ep} T^{-2} 12^8)^{1/3}$ for some fixed $r_0$, then
\begin{equation} \frac{T_1+t}{T}\leq \frac{1}{12}. \label{dodo1}\end{equation}
Note that $T_1-T_2=\frac{T}{6}$. The inequalities (\ref{dodo}) and (\ref{dodo1}) implies that
\begin{equation} |T_2+t|\geq \frac{T_1-T_2}{2}.
\label{guosi}
\end{equation}
Recall the definition of $\varphi_1(t)$, (\ref{dodo}) and (\ref{guosi}), we obtain that
$$ \varphi_1 e^{-2\tau g(r)}r^{-n}\leq \exp\{ -\frac{1}{2^4}(\frac{\tau^3 T^2}{C r^{4-\ep}})^{3/8}-(2\tau+n)\ln r\}.$$
Thus, if $\tau>C$ for some large $C$ and $r$ is sufficiently small, the inequality
(\ref{twos}) and the last inequality imply that
\begin{equation}
 \int_{D_1^{'}}E(r, t;\tau) u^2  e^{-2\tau g(r)} r^{-n} d v_g dt \leq C (\frac{r_0^2}{T})^2 \int_{D_1^{'}}  u^2 d v_g dt.
 \label{fly}
\end{equation}
Arguing in the same way with the $\varphi(t)=\varphi_2(t)$ on the region $D_1^{''}$, we will get the similar estimates as (\ref{fly}). Therefore, we arrive at
$$
L_1 \leq C (\frac{r_0^2}{T})^2 \int_{D_1}  u^2 d v_g dt.
$$
It is also true that
\begin{equation}
L_1\leq C e^{-2\tau g(\frac{r_3}{2})} (\frac{r_0^2}{T}+1)^2 \int_{Q_{r_3}^T} u^2 d v_g dt.
\label{dudu}
\end{equation}
 On the domain $D_3$, by the fact that $-g(r)$ is decreasing, we have
\begin{align}
  &\int_{D_2}\big(\triangle (u\xi )-\partial_t (u\xi )-\tilde{V}(r, \theta ,t)u\xi \big)^2 e^{-2\tau g(r)} r^{4-n} d v_g dt \nonumber\\
  &=\int_{D_2}\big(\triangle \xi u+ 2\nabla \xi\cdot \nabla u +\xi\triangle u-\xi_t u-\xi u_t-\tilde{V}(r, \theta ,t)u\xi \big)^2 e^{-2\tau g(r)} r^{4-n} d v_g dt   \nonumber\\
  &\leq C\int_{D_2}(\frac{1}{r_1^4} u^2+\frac{1}{r_1^2}|\nabla u|^2+\frac{1}{T^2} u^2) e^{-2\tau g(r)} r^{4-n} d v_g dt \nonumber\\
 &\leq C e^{-2\tau g(r_1)} r_1^{4-n} \int_{D_2}(\frac{1}{r_1^4}u^2+\frac{1}{r_1^2}|\nabla u|^2+\frac{1}{T^2}u^2)  d v_g dt.
 \label{estiD2f}
\end{align}
Using the standard Caccioppoli inequality for parabolic equations (\ref{goal}), it follows that
\begin{equation}
\int_{D_2} |\nabla u|^2 d v_g dt\leq C(1+\|\tilde{V}\|_{L^\infty})(\frac{1}{r_1^2}+\frac{1}{T})
\int_{\mathbb B_{\frac{7r_1}{4}}\backslash \mathbb B_{\frac{3r_1}{4}}\times [\frac{-11T}{12},\ \frac{11T}{12}]} u^2d v_g dt.
\label{cacci}
\end{equation}
Thus, the inequalities (\ref{estiD2f}) and (\ref{cacci}) yield that
\begin{align}
&\int_{D_2}\big(\triangle (u\xi )-\partial_t (u\xi )-\tilde{V}(r, \theta ,t)u\xi \big)^2 e^{-2\tau g(r)} r^{4-n} d v_g dt \nonumber\\
&\leq C e^{-2\tau g(r_1)}r_1^{-n} (\frac{r_1^2}{T}+1)^2 (1+\|\tilde{V}\|_{L^\infty} ) \int_{\mathbb B_{\frac{7r_1}{4}}\backslash \mathbb B_{\frac{3r_1}{4}}\times [\frac{-11T}{12},\ \frac{11T}{12}]} u^2d v_g dt.
\label{estiD3}
\end{align}
We use the similar strategy to deal with the integral on the domain $D_3$. Using the assumption of $\xi$ and then the
Caccioppoli inequality as (\ref{estiD3}), we obtain that
\begin{align}
&\int_{D_3}\big(\triangle (u\xi )-\partial_t(u\xi )-\tilde{V}(r, \theta ,t)u\xi \big)^2 e^{-2\tau g(r)} r^{4-n} d v_g dt \nonumber\\
&\leq C e^{-2\tau g(\frac{r_3}{2})}r_3^{-n} (\frac{r_3^2}{T}+1)^2(1+ \|\tilde{V}\|_{L^\infty} ) \int_{\mathbb B_{\frac{7r_3}{8}}\backslash \mathbb B_{\frac{r_3}{3}}\times [\frac{-11T}{12},\ \frac{11T}{12}]}u^2d v_g dt.
\label{estiD2}
\end{align}
Define a new set
$$D_4^{r_2}=\{(x, t)\in D_4| \quad  |x|\leq r_2\}$$ for
${r_2}\leq \frac{r_3}{2}$.
By the fact that $-g(r)$ is a decreasing function again, it is clear that
\begin{align}
\tau^3 e^{-2\tau g(r_2)} r^{-n+\ep}_2 \int_{D_4^{r_2}}  u^2 d v_g dt
\leq C \tau^3  \int_{ D_4}  u^2 e^{(-2\tau+\ep)g(r)} r^{-n} d v_g dt.
\label{comeon}
\end{align}
Together with the inequalities (\ref{lat}), (\ref{latter}), (\ref{dudu}), (\ref{estiD3}), (\ref{estiD2}) and (\ref{comeon}), and the assumption that
$\|\tilde{V}\|_{C^{1, 1}}\leq M$,
it follows that
\begin{align}
 \int_{ D_4^{r_2}}  u^2 d v_g dt \leq C Mr^{-\ep}_2 (\frac{r_0^2}{T}+1)^2&\big[ \exp\{-2\tau\big(g(r_1)-g(r_2)\big)\} (\frac{r_2}{r_1})^n \int_{ Q^T_{2r_1}} u^2 dv_g dt \nonumber\\
 & + \exp\{-2\tau\big(g(\frac{r_3}{2})-g(r_2)\big)\} (\frac{r_2}{r_3})^n \int_{ Q^T_{r_3}} u^2 dv_g dt\big].
 \label{add}
\end{align}
We add  $\int_{\mathbb B_{\frac{3r_1}{2}}\times [-T_2, \ T_2]} u^2  dv_g dt$ to both sides of (\ref{add}). Recall that $T_2=\frac{2T}{3}$. Since $\exp\{-2\tau\big(g(r_1)-g(r_2)\big)\}>1$, it follows that
\begin{align}
 \int_{Q^{\frac{2T}{3}}_{r_2}}  |u|^2 d v_g dt \leq C Mr^{-\ep}_2 (\frac{r_0^2}{T}+1)^2&[ \exp\{-2\tau\big(g(r_1)-g(r_2)\big)\} (\frac{r_2}{r_1})^n \int_{ Q^T_{2r_1}} u^2 dv_g dt \nonumber\\
 & + \exp\{-2\tau\big(g(\frac{r_3}{2})-g(r_2)\big)\} (\frac{r_2}{r_3})^n \int_{ Q^T_{r_3}} u^2 dv_g dt].
 \label{rewrite}
\end{align}
For ease of notation, set
$$\beta_1=\big( Mr_2^{-\ep}(\frac{r_0^2}{T}+1)^2  (\frac{r_2}{r_1})^{n}\big)^{\frac{1}{2}}  $$and
$$\beta_1=\big( Mr_2^{-\ep}(\frac{r_0^2}{T}+1)^2 (\frac{r_2}{r_3})^{n}\big)^{\frac{1}{2}}.  $$
Let
$$\big(\int_{Q^T_{2r_1}} u^2  dv_g dt\big)^{\frac{1}{2}}=U_1  \quad \mbox{and} \quad \big(\int_{Q^T_{r_3}} u^2  dv_g dt\big)^{\frac{1}{2}}=U_2.
$$
Thus,  the inequality (\ref{rewrite}) can be rewritten as
\begin{equation}
\|u\|_{L^2(Q^{\frac{2T}{3}}_{r_2})}\leq C\beta_1  \exp\{-\tau\big(g(r_1)-g(r_2)\big)\} U_1+ C\beta_2 \exp\{-\tau\big(g(\frac{r_3}{2})-g(r_2)\big)\}U_2. \label{write}
\end{equation}
Define a new parameter $k_0$ as
$$ \frac{1}{k_0}=\frac{g(\frac{r_3}{2})-g(r_1)}{g(\frac{r_3}{2})-g(r_2)}. $$
Notice that $0<k_0<1$.
If $r_1$ is sufficiently small and $r_2$, $r_3$ are fixed constants, then $\frac{1}{k_0}\approx \ln \frac{1}{r_1}$. Set
$$\tau_1=\frac{k_0}{g(\frac{r_3}{2})-g(r_2)}\ln \frac{\beta_2 U_2}{ \beta_1 U_1}.    $$

On one hand, if $\tau_1> CM^{\frac{1}{2}}$, the previous calculations hold with such $\tau_1$. We replace those $\tau$ by such $\tau_1$. Thus, we get from (\ref{write}) that
\begin{equation}
\|u\|_{L^2(Q^{\frac{2T}{3}}_{r_2})}\leq 2C(\beta_1 U_1)^{k_0}(\beta_2 U_2)^{1-k_0}.
\end{equation}
That is,
\begin{equation}
\|u\|_{L^2(Q^{\frac{2T}{3}}_{r_2})}\leq 2M^{\frac{1}{2}} C r_2^{-\frac{\ep}{2}}(\frac{r_0^2}{T}+1)
\big[(\frac{r_2}{r_1})^{\frac{n}{2}}\|u\|_{L^2(Q^{{T}}_{2r_1})} \big]^{k_0} \big[(\frac{r_2}{r_3})^{\frac{n}{2}}\|u\|_{L^2(Q^{{T}}_{r_3})} \big]^{1-k_0}.
\label{ineq1}
\end{equation}
On the other hand, if $\tau_1\leq  CM^{\frac{1}{2}}$, it follows that
$$\beta_2 U_2\leq \exp\{CM^{\frac{1}{2}} \big( g(\frac{r_3}{2})-g(r_1)\big)\} \beta_1 U_1.  $$
We can write the last inequality as
\begin{equation}
\|u\|_{L^2(Q^{\frac{2T}{3}}_{r_2})}\leq (\frac{r_3}{r_1})^{\frac{n}{2}} \exp\{CM^{\frac{1}{2}} \big( g(\frac{r_3}{2})-g(r_1)\big)\} \|u\|_{L^2(Q^{{T}}_{2r_1})}.
\label{ineq2}
\end{equation}
Combining the inequalities (\ref{ineq1}) and (\ref{ineq2}), we derive the following $L^2$ version of three cylinder inequality,
\begin{align}
\|u\|_{L^2(Q^{\frac{2T}{3}}_{r_2})}&\leq C M^{\frac{1}{2}} r_2^{-\frac{\ep}{2}}(\frac{r_0^2}{T}+1)
\big[(\frac{r_2}{r_1})^{\frac{n}{2}}\|u\|_{L^2(Q^{T}_{2r_1})} \big]^{k_0} \big[(\frac{r_2}{r_3})^{\frac{n}{2}}\|u\|_{L^2(Q^{{T}}_{r_3})} \big]^{1-k_0} \nonumber \\ &+C(\frac{r_3}{r_1})^{\frac{n}{2}} \exp\{CM^{\frac{1}{2}} \big( g(\frac{r_3}{2})-g(r_1)\big)\} \|u\|_{L^2(Q^{{T}}_{2r_1})}.
\label{three}
\end{align}
For the parabolic equations (\ref{goal}), the following standard local $L^\infty$ estimates hold
\begin{equation}
 \|u\|_{L^\infty(Q^{{T/2}}_{R/2})}\leq CR^{-\frac{n}{2}} T^{-\frac{1}{2}}(1+\|\tilde{V}\|_{L^\infty }^{\frac{n+2}{4}})
 \|u\|_{L^2(Q^{{T}}_{R})}.
 \label{lllin}
\end{equation}
From (\ref{three}) and (\ref{lllin}), we have the $L^\infty$ version of three cylinder inequality,
\begin{align}
\|u\|_{L^\infty(Q^{{T}/{2}}_{r_2})}&\leq C  r_2^{-\frac{\ep}{2}}(\frac{r_0^2}{T}+1) M^{\frac{n+4}{4}}
\|u\|_{L^\infty(Q^{T}_{2r_1})}^{k_0} \|u\|_{L^\infty(Q^{T}_{r_3})}^{1-k_0} \nonumber \\ &+C(\frac{r_3}{r_2})^{\frac{n}{2}} M^{\frac{n+2}{4}}\exp\{C M^{\frac{1}{2}} \big( g(\frac{r_3}{2})-g(r_1)\big)\} \|u\|_{L^\infty(Q^{{T}}_{2r_1})}.
\label{Koinfnity}
\end{align}
This completes the proof the Lemma \ref{lem2}.

\end{proof}
The proof of the three cylinder inequality in Lemma \ref{lem3} is very similar to that in Lemma \ref{lem2}. We only sketch the proof.
\begin{proof}[Proof of Lemma \ref{lem3}]
We apply the same test function $u(x,t)\xi(x, t)$ in the Carleman estimates (\ref{Carle2}). From the Caccioppoli inequality for the equation (\ref{goal2}), it holds that
\begin{equation}\int_{D_2} |\nabla u|^2 d v_g dt\leq C(1+\|{V}\|_{L^\infty}+\|{W}\|_{L^\infty}^2)(\frac{1}{r_1^2}+\frac{1}{T})
\int_{\mathbb B_{\frac{7r_1}{4}}\backslash \mathbb B_{\frac{3r_1}{4}}\times [\frac{-11T}{12},\ \frac{11T}{12}]} u^2d v_g dt.
\label{cacciop}
\end{equation}

Performing the discussions as Lemma \ref{lem2}  in two cases as $\tau_1> C(M_0^{\frac{2}{3}}+M_1^2)$ or $\tau_1< C(M_0^{\frac{2}{3}}+M_1^2)$, we derive the following $L^2$ type of three cylinder inequality,
\begin{align}
\|u\|_{L^2(Q^{\frac{2T}{3}}_{r_2})}&\leq C  (\frac{r_0^2}{T}+1) r_2^{-\frac{\ep}{2}}(M_0^{\frac{1}{2}}+M_1)^{\frac{n+2}{2}}
\big[(\frac{r_2}{r_1})^{\frac{n}{2}}\|u\|_{L^2(Q^{T}_{2r_1})} \big]^{k_0} \big[(\frac{r_2}{r_3})^{\frac{n}{2}}\|u\|_{L^2(Q^{{T}}_{r_3})} \big]^{1-k_0} \nonumber \\ &+C(\frac{r_0^2}{T}+1) (\frac{r_3}{r_1})^{\frac{n}{2}} \exp\{C(M_0^{\frac{2}{3}}+M_1^2) \big( g(\frac{r_3}{2})-g(r_1)\big)\} \|u\|_{L^2(Q^{{T}}_{2r_1})}.
\label{dcdc}
\end{align}
We also have the standard local $L^\infty$ estimates for parabolic equations (\ref{goal2}),
\begin{equation}
 \|u\|_{L^\infty(Q^{{T/2}}_{R/2})}\leq CR^{-\frac{n}{2}} T^{-\frac{1}{2}}(1+\|{V}\|_{L^\infty }^{\frac{1}{2}}+\|{W}\|_{L^\infty })^{\frac{n+2}{2}}
 \|u\|_{L^2(Q^{{T}}_{R})}.
 \label{lllin2}
\end{equation}
The combination of the inequalities (\ref{cacciop}), (\ref{dcdc}) and (\ref{lllin2}) will lead to (\ref{infnity1}). Thus, we arrive at the proof of lemma.

\end{proof}

\section{Propagation of smallness}
In this section, we will use the three cylinder inequality in the propagation
of smallness argument to establish the vanishing order for solutions on
$\mathcal{M}$. The propagation of smallness argument for elliptic equations based on the three-ball theorem has been performed in e.g. \cite{DF88}
\cite{Zhu16} for quantitative unique continuation. For parabolic equations, we adapt this idea with three cylinder inequalities to
obtain the order of vanishing estimate.

\begin{proof} [Proof of Theorem \ref{th1} ]
We start the propagation at any point $x_0\in \mathcal{M}$.
Choose $T=\frac{1}{2}$. Let $r_1=\frac{r}{2}$, $r_2=2r$ and $r_3=5r$.
We apply the $L^\infty$ version of three cylinder inequality (\ref{infnity}) for the equation (\ref{goal}), then

\begin{align}
\|u\|_{L^\infty (Q^{1/4}_{2r}) }&\leq C  r^{-\frac{\ep}{2}} M^{\frac{n+4}{4}}
\|u\|_{L^\infty (Q^{1/2}_{r})}^{k_0} \|u\|_{L^\infty(Q^{1/2}_{5r})}^{1-k_0} \nonumber \\ &+C M^{\frac{n+2}{4}}\exp\{C M^{\frac{1}{2}} \big( g(\frac{5r}{2})-g(\frac{r}{2})\big)\}
 \|u\|_{L^\infty (Q^{1/2}_{r})},
\label{innf}
\end{align}
where
$${k_0}=\frac{g(\frac{5r}{2})-g(2r)}{g(\frac{5r}{2})-g(\frac{r}{2})} $$
and $C$ depends on $\ep$ and the manifold $\mathcal{M}$.
Meanwhile, we can check that
$$c\leq g(\frac{5r}{2})-g(2r)\leq C \quad \mbox{and} \quad c\leq g(\frac{5r}{2})-g(\frac{r}{2})\leq C,$$
where $C$ and $c$ are positive constants are independent of $r$.
Thus, the parameter $k_0$ does not depend on  $r$.

We choose a small $r < \frac{r_0}{20}$ such that
$$\sup_{Q^{{1}/{2}}_r}|u|=\delta.$$
We claim that $\delta>0$. Otherwise, by the unique continuation property, $u\equiv 0$
in $\mathcal{M}^1$, which is obviously impossible. Since $\disp \sup_{\mathcal{M}}|u(x)|\geq 1$, by the continuity, there exists some $\bar x\in \mathcal{M}$ such that $$\disp \abs{u(\bar x)}=\sup_{\mathcal{M}}|u(x)|\geq 1.$$
There also exists a sequence of balls with radius $r$, centered at
$x_0, \ x_1, \ldots, x_m$ so that $x_{i+1}\in \mathbb B_{r}(x_i)$
for every $i=0, 1, \ldots, m$, and $\bar x\in \mathbb B_{r}(x_m)$. The number of
balls, $m$, depends on the radius $r$ that will be fixed later. The
application of $L^\infty$ version of three cylinder inequality
(\ref{innf}) at the $x_0$ and the boundedness assumption that
$\|u\|_{L^\infty(\mathcal{M}^1)}\leq {C_0}$ yield that
\begin{align}
\|u\|_{L^\infty(Q^{{1}/{4}}_{2r})}&\leq C  r^{-\frac{\ep}{2}} M^{\frac{n+4}{4}}
\delta^{k_0} C_0^{1-k_0}  +C \delta \exp\{CM^{\frac{1}{2}}\}.
\label{mama}
\end{align}

Choosing $T=\frac{1}{4}$, we apply the $L^\infty$ version of three cylinder theorem centered at $(x_1, 0)$. It follows that
\begin{align}
\|u\|_{L^\infty (Q^{1/8}_{2r}(x_1)) }&\leq C  r^{-\frac{\ep}{2}} M^{\frac{n+4}{4}}
\|u\|_{L^\infty (Q^{{1}/{4}}_{r}(x_1))}^{k_0} \|u\|_{L^\infty (Q^{{1}/{4}}_{5r}(x_1))}^{1-k_0} \nonumber \\ &+C \exp\{CM^{\frac{1}{2}} \big( g(\frac{5r}{2})-g(\frac{r}{2})\big)\}
 \|u\|_{L^\infty(Q^{{1}/{4}}_{r}(x_1 ))}.
\label{anotht}
\end{align}
Recall that $Q^{{1}/{4}}_{r}(x_1)=\mathbb B_r(x_1)\times (-\frac{1}{4}, \ \frac{1}{4})$. Since $x_1\in \mathbb B_r(x_0)$, then
\begin{equation}
\|u\|_{L^\infty (Q^{1/4}_{r}(x_1))}\leq  \|u\|_{L^\infty (Q^{1/4}_{2r}(x_0) )}.
\label{dumb}
\end{equation}
Therefore, from (\ref{mama}), (\ref{anotht}) and (\ref{dumb}),
\begin{align*}
\|u\|_{L^\infty(Q^{1/8}_{2r}(x_1)) }&\leq C r^{-\frac{\ep}{2}}M^{\frac{n+4}{4}}
[r^{-\frac{\ep}{2}} M^{\frac{n+4}{4}}\delta^{k_0} C_0^{1-k_0}]^{k_0} C_0^{1-k_0} \nonumber \\
&+ 2r^{-\frac{\ep}{2}} M^{\frac{n+4}{4}}\delta^{k_0} \exp\{CM^{\frac{1}{2}}\} C_0^{1-k_0}+\delta \exp\{CM^{\frac{1}{2}}\}.
\end{align*}

Repeating the above argument with a chain of cylinders centered at $(x_i, 0)$,
it follows that
\begin{equation}
\|u\|_{L^\infty (Q^{1/2(i+2)}_{r}(x_i))} \leq  C_i\exp\{D_i M^{\frac{1}{2}}\} r^{-E_i\ep} \delta^{F_i}
\label{itera}
\end{equation}
for $i=0, 1, \cdots, m$, where $C_i$ $D_i$, $E_i$, $F_i$ are constant depending on $m$,
$C_0$, $\ep$ and the manifold $\mathcal{M}$.

 After the finite $m$ steps, we will get that $\bar x\in \mathbb B_r( x_m )$. Since $u(\bar x)\geq 1$, it is clear that
 $$ \|u\|_{L^\infty(Q^{1/2(m+2)}_{r}(x_m))}\geq 1.  $$
Thus, from (\ref{itera}), we derive that
\begin{equation}
\sup_{Q^{{1}/{2}}_r}|u|=\delta\geq C r^C \exp\{-CM^{\frac{1}{2}}\}.
\end{equation}

Now we fix $r$ as a small number so that $m$ is a fixed
constant. We are going to apply the three cylinder inequality again. Choose $T=1$. Let $r_2= r$ and $r_3=4r$. Set
$2r_1 << r$, i.e. $r_1$ sufficiently small compared with $r$.
Applying the three cylinder
inequality (\ref{infnity}) at $(x_0, 0)$ implies that
$$ \delta \leq {I}_1 +I_2,$$
where$$
{ I}_1 = C  r^{-\frac{\ep}{2}} M^{\frac{n+4}{4}}
\|u\|_{L^\infty (Q^{1}_{2r_1})}^{k_0} \|u\|_{L^\infty (Q^{1}_{4r} )}^{1-k_0}$$
and
$$
I_2 = C M^{\frac{n+2}{4}}\exp\{CM^{\frac{1}{2}} \big( g(2r)-g(r_1)\big)\}
 \|u\|_{L^\infty (Q^{1}_{2r_1})}
$$
with $\disp k_0 = \frac{g(2r)-g(r)}{g(2r)-g(r_1)}$.

On one hand, if ${I}_1 \leq I_2$, then
\begin{align*}
&r^C \exp\{-CM^{\frac{1}{2}}\}
\le \delta \le 2 I_2 \\
&\le 2 C M^{\frac{n+2}{4}}\exp\{CM^{\frac{1}{2}} \big( g(2r)-g(r_1)\big)\}
 \|u\|_{L^\infty (Q^{1}_{2r_1})}.
\end{align*}
Recall that $g(r)\approx \ln r$ as $r\to 0$. Since $r_1 << r$, it is true that $g\pr{r_1}-\pr{C + g \pr{2r}}
\ge c g\pr{r_1}$ for some fixed constant $c>1$.  We get that
\begin{align}
\|u\|_{L^\iny(Q^1_{2r_1})} & \geq C \exp\{ CM^{\frac{1}{2}} g(r_1)\} \nonumber\\
 &=C r_1^{C{M}^{\frac{1}{2}}}. \label{case1}
\end{align}
On the other hand, if $I_2 \leq {I}_1$, we obtain that
\begin{align*}
&r^C \exp\{-CM^{\frac{1}{2}}\}
\le \delta \le 2 {I}_1 \\
&\le 2 C  r^{-\frac{\ep}{2}}M^{\frac{n+4}{4}}
\|u\|_{L^\infty (Q^{1}_{2r_1} )}^{k_0} \|u\|_{L^\infty (Q^{1}_{4r} )}^{1-k_0}.
\end{align*}
Taking $\|u\|_{L^\infty (\mathcal{M}^1)}\leq C_0$ into
consideration, it follows that
\begin{align*}
r^C \exp\{-C M^{\frac{1}{2}}\} M^{-\frac{n+4}{4}}
 &\leq  C \|u\|_{L^\infty (Q^{1}_{2r_1} )}^{k_0}.
\end{align*}
Since $r_1$ is
sufficiently small compared with $r$,  raising both sides to order $\frac{1}{k_0}$ in the last inequality and
taking the assumption that $\frac{1}{k_0}\approx \ln \frac{1}{r_1}$ into account, we obtain that
\begin{align}
\|u\|_{L^\infty(Q^{1}_{2r_1})}&\geq [Cr^C \exp\{-CM^{\frac{1}{2}}\}]^{\frac{1}{k_0}} \nonumber \\
& \geq C r_1^{CM^{\frac{1}{2}}}.
\label{case2}
\end{align}
Together with (\ref{case1}) and (\ref{case2}), we arrive the proof of Theorem \ref{th1}.
\end{proof}

At last, using the same idea of propagation of smallness argument, we show the proof of Theorem \ref{th2}. We only sketch the proof.
\begin{proof}[Proof of Theorem \ref{th2}]
To obtain the vanishing order for the parabolic equations (\ref{goal2}), we carry out the same propagation of smallness argument as the proof of Theorem \ref{th2}. We used the three cylinder inequality (\ref{infnity1}) for (\ref{goal2}). Observe from the proof of Theorem \ref{th1}, we can see that the power $C(M_0^{\frac{2}{3}}+M_1^2)$ in the exponential function $ \exp\{C(M_0^{\frac{2}{3}}+M_1^2)\}$ in (\ref{infnity1}) will  determine the rate of vanishing. Thus, from (\ref{infnity1}), the vanishing order for the solution of (\ref{goal2}) is given by $C(M_0^{\frac{2}{3}}+M_1^2)$. This completes the proof of Theorem \ref{th2}.
\end{proof}
Concerning about the assumption of solutions, we have the following remarks.
\begin{remark}
From the proof of Theorem \ref{th1} and \ref{th2}, we can define the vanishing order of solution
at $x_0\in \mathcal{M}$ by
\begin{equation}
\sup\{ k| \ \ \limsup_{r\to 0^+}\frac{\sup_{Q^{T_0}_r(x_0)} |u|}{r^k}\}
\label{defi2}
\end{equation}
for any fixed constant $0<T_0<1$, since the Carleman estimates in section 1 hold in any time interval containing $\{0\}$.
\label{rem1}
\end{remark}

\begin{remark}
We consider the  normalization of solutions in (\ref{normalize}). We can also normalize the solutions as follows
\begin{equation}
\|u(x,t)\|_{L^\infty(\mathcal{M})}\geq 1 \quad  \mbox{and}  \quad   \|u(x,t)\|_{L^2(\mathcal{M}^1)}\leq C_0
\label{normalize2}
\end{equation}
for some fixed constant $C_0$. The same vanishing order results in Theorem  \ref{th1} and \ref{th2} hold. For example, let's consider Theorem \ref{th1}.
By the local $L^\infty$ estimates, from (\ref{normalize2}), we can show that
\begin{equation}
\|u(x,t)\|_{L^\infty(\mathcal{M}^{\frac{1}{2}})}\leq C M^{\frac{n+2}{4}} C_0
\label{note}
\end{equation}
for some $C$ depending on $\mathcal{M}$. Observe from the propagation of smallness argument, for large constant $M$, the upper bound of the solution (\ref{note}) will be incorporated in the exponential function $\exp\{CM^{\frac{1}{2}}\}$ which determines the rate of vanishing.
\label{rem2}
\end{remark}

\begin{remark}
Using the same method, we are also able to deal with parabolic equations with Lipschitz leading coefficient
$$  \partial_i(a_{ij}(x, t)\partial_j u)-\partial_tu -W(x,t)\cdot \nabla u-V(x,t)u=0,$$
where
$$ C^{-1}|\xi|^2 \leq a^{ij}(x,t)\xi_i \xi_j\leq C|\xi|^2$$
and
$$ \sum^n_{i,j=1}|a^{ij}(x, t)-a^{ij}(y,s)|\leq C(|x-y|+|t-s|) $$
in $D\times (-T, T)$, where $D$ is a domain in $\mathbb R^n$.
\end{remark}

\end{document}